\newtheorem{theorem}{Theorem}[section]
\newtheorem{lemma}[theorem]{Lemma}
\newtheorem{proposition}[theorem]{Proposition}
\theoremstyle{definition}
\newtheorem{remark}[theorem]{Remark}
\newcommand{\baa}{\begin{array}}
\newcommand{\eaa}{\end{array}}
\newcommand{\ba}{\begin{eqnarray}}
\newcommand{\ea}{\end{eqnarray}}
\newcommand{\be}{\begin{equation}}
\newcommand{\ee}{\end{equation}}
\DeclareMathOperator\Tr{Tr}
\def\N{\mathbb{N}}
\def\R{\mathbb{R}}
\let\O=\Omega
\let\e=\varepsilon
\def\epsilon{\varepsilon}
\let\vp=\varphi
\let\t=\tilde
\let\ol=\overline
\let\ul=\underline
\def\1{\mathbbm{1}}
\def\pf{principal eigenfunction}
\newenvironment{formula}[1]{\begin{equation}\label{#1}}{\end{equation}\noindent}
\def\Fi#1{\begin{formula}{#1}}
\def\Ff{\end{formula}\noindent}
\newcommand{\SE}{\setcounter{equation}{0} \section}
\def\lt{\lambda(\tau)}
\def\pt{\varphi_\tau}
\begin{document}

\title{{\bf Optimization of some eigenvalue problems with large drift}\thanks{The project leading to this publication has received funding from Excellence Initiative of Aix-Marseille University - A*MIDEX and Excellence Laboratory Archimedes LabEx (ANR-11-LABX-0033), French ``Investissements d'Avenir" programmes, from the ANR NONLOCAL project (ANR-14-CE25-0013) funded by the French National Research Agency, and from the European Research Council under the European Union's Seventh Framework Programme (FP/2007-2013) ERC Grant Agreement n.~321186~- ReaDi~- Reaction-Diffusion Equations, Propagation and Modelling.}}

\author{Fran{\c{c}}ois Hamel$^{\,\hbox{\small{a}}}$, Luca Rossi$^{\,\hbox{\small{b}}}$ and Emmanuel Russ$^{\,\hbox{\small{c}}}$\\
\\
\footnotesize{$^{\,\hbox{\small{a}}}$ Aix Marseille Univ, CNRS, Centrale Marseille, I2M, Marseille, France}\\
\footnotesize{$^{\,\hbox{\small{b}}}$ CNRS, EHESS, PSL Research University, CAMS, Paris, France}\\
\footnotesize{$^{\,\hbox{\small{c}}}$ Universit\'e Grenoble Alpes, Institut Fourier}\\
\footnotesize{100 rue des maths, BP 74, 38402 Saint-Martin d'H\`eres Cedex, France}}
\date{}

\maketitle

\begin{abstract}
This paper is concerned with eigenvalue problems for non-symmetric elliptic operators with large drifts in bounded domains under Dirichlet boundary conditions. We consider the minimal principal eigenvalue and the related principal eigenfunction in the class of drifts having a given, but large, pointwise upper bound. We show that, in the asymptotic limit of large drifts, the maximal points of the optimal principal eigenfunctions converge to the set of points maximizing the distance to the boundary of the domain. We also show the uniform asymptotic profile of these principal eigenfunctions and the direction of their gradients in neighborhoods of the boundary.
\end{abstract}


\SE{Introduction}\label{intro}

Throughout this paper, if $n\geq 1$, by ``domain'' of $\R^n$, we mean an open connected subset of $\R^n$. The set of all bounded domains of $\R^n$ with $C^2$ boundary will be denoted by ${\mathcal O}$. If $\Omega\in {\mathcal O}$ and $x\in \overline{\Omega}$, define $d(x):=d(x,\partial\Omega)$. For $\delta>0$, let
\begin{equation}\label{Odelta}
\Omega^{\delta}=\big\{x\in\O:\ d(x)<\delta\big\}
\end{equation}
be the open neighborhood of $\partial\O$ of width $\delta$, relatively to $\O$. The Euclidean norm in $\R^n$ will be denoted by $\left\vert \cdot \right\vert$. When $v:\Omega\rightarrow \R^n$ is a measurable vector field on $\Omega$, we say that $v\in L^{\infty}(\Omega,\R^n)$ if and only if $\left\Vert v\right\Vert_{\infty}:=\left\Vert \left\vert v\right\vert\right\Vert_{L^{\infty}(\O)}<+\infty$. If $A\subset \R^n$ is measurable, $\left\vert A\right\vert$ stands for the Lebesgue measure of $A$. Finally, we 
let $B_r(x)$ denote the open Euclidean ball of $\R^n$ with center $x\in\R^n$ and radius $r>0$, and we set $B_r=B_r(0)$. With a slight abuse of notation, we also note $B_0(x)=\{x\}$ and $B_0=\{0\}$.

Let $n\geq 1$ and $\O\in {\mathcal O}$. Consider a bounded measurable vector field $v:\O\rightarrow\R^n$. We are interested in the principal eigenvalue of the operator $-\Delta+v\cdot\nabla$ in $\O$ under Dirichlet boundary condition, which will be denoted by $\lambda_v$ in the sequel.\footnote{Note that $\lambda_v$ depends on $\Omega$ and $v$, but the dependence with respect to $\Omega$ will not be explicitly written down. This is harmless for the present paper, since the domain $\Omega$ will be given and fixed for all the issues we consider.} Recall (\cite[p.~49]{BNV}) that $\lambda_v$ is real-valued and that the eigenspace corresponding to $\lambda_v$ has dimension $1$. Moreover, if $\vp$ is ``the'' eigenfunction in $L^2(\O)$ corresponding to the eigenvalue $\lambda_v$, then (up to normalization) $\vp>0$ in $\O$. Thus, the function $\vp$ satisfies
\Fi{ev}
\begin{cases}
-\Delta\vp+v\.\nabla\vp=\lambda_v\vp & \text{in }\O,\vspace{3pt}\\
\vp>0 & \text{in }\O,\\
\vp=0  & \text{on }\partial\O.
\end{cases}
\Ff
By standard elliptic regularity, $\vp\in W^{2,p}(\Omega)$ for all $p\in [1,\infty)$, which entails that $\vp\in C^{1,\alpha}(\overline{\Omega})$ for all $\alpha\in (0,1)$. The strong maximum principle also ensures that $\lambda_v>0$.

The main results of the present paper arise from the study of optimization pro\-blems for $\lambda_v$. An archetypical optimization problem is the following question: given any fixed $m>0$ and $\tau\geq 0$, among all the domains $\O\in {\mathcal O}$ with $\left\vert \Omega\right\vert=m$ and all the vector fields $v\in L^{\infty}(\Omega,\R^n)$ with $\left\Vert v\right\Vert_{\infty}\leq \tau$, is the infimum of $\lambda_v$ reached for some $\Omega$ and some $v$ ? When $\tau=0$, this amounts to minimizing the principal eigenvalue of~$-\Delta$ in $\Omega$ under Dirichlet boundary condition, and it is a well-known fact (\cite{Faber,Krahndim2,Krahndimn}) that the infimum is reached if and only if $\Omega$ is an Euclidean ball. For $\tau>0$, it was proved by the first and the third author (\cite[Theorem 2.9]{HNR}) that $\lambda_v$ reaches its infimum if and only if, up to translation, $\Omega$ is an Euclidean ball centered at $0$ and $v(x)=\tau\frac{x}{\left\vert x\right\vert}$. As a matter of fact, the first step of the proof of~\cite[Theorem 2.9]{HNR} given in \cite{HNRcras,HNRpreprint} is the solution of an optimization problem for $\lambda_v$ when the domain $\Omega$ is fixed and the vector field $v$ varies under the constraint $\left\Vert v\right\Vert_{\infty}\leq \tau$. More precisely, given $\Omega\in\mathcal{O}$, define, for $\tau\geq 0$,
\Fi{lt}
\lambda(\tau):=\inf\left\{\lambda_v :\ \left\Vert v\right\Vert_{\infty}\leq \tau\right\},
\Ff
where the infimum is taken over all the vector fields $v\!\in\! L^{\infty}(\Omega,\R^n)$ such that $\left\Vert v\right\Vert_{\infty}\!\leq\!\tau$. Then (\cite[Theorem 6.6]{HNR}), there exists a unique vector field $v_{\tau}\in L^{\infty}(\Omega,\R^n)$ with $\left\Vert v_{\tau}\right\Vert_{\infty}\leq \tau$ such that $\lambda(\tau)=\lambda_{v_{\tau}}$. Moreover, $\left\vert v_{\tau}(x)\right\vert=\tau$ for a.e. (almost every) $x\in \Omega$, and, if $\varphi_{\tau}$ is the corresponding eigenfunction of~\eqref{ev} with $v=v_\tau$, one has $|\nabla \varphi_{\tau}(x)|>0$ for a.e. $x\in\Omega$ and
\Fi{vmin}
v_{\tau}=-\tau\,\frac{\nabla\pt}{|\nabla\pt|}\qquad\text{a.e. in }\O.
\Ff
Thus, $\varphi_{\tau}$ solves
\Fi{evm}
\begin{cases}
-\Delta\varphi_{\tau}-\tau\left\vert \nabla\varphi_{\tau}\right\vert=\lambda(\tau)\varphi_{\tau} & \text{in }\O,\\
\pt>0 & \text{in }\O,\\
\pt=0  & \text{on }\partial\O.
\end{cases}
\Ff
This entails that, for all $\alpha\in (0,1)$, $\Delta\vp_{\tau}\in C^{0,\alpha}(\overline{\Omega})$, so that $\vp_{\tau}\in C^{2,\alpha}_{loc}(\Omega)$ (if $\Omega$ is assumed to be of class $C^{2,\alpha}$ for some $\alpha\in(0,1)$, the function $\pt$ would then be of class $C^{2,\alpha}(\overline{\O})$). We normalize $\vp_{\tau}>0$ by setting $\max_{\overline{\O}} \vp_{\tau}=1$. The function $\pt$ is then the unique solution to \eqref{evm} 
satisfying this normalization.
 Notice also that the equality $\lambda(\tau)=\lambda_{v_\tau}$ immediately yields
$$\lambda(\tau)>0.$$

When $\Omega=B_R$ for some $R>0$, then $\vp_{\tau}$ is radially decreasing and $v_{\tau}(x)=\tau\frac{x}{\left\vert x\right\vert}$ for all $x\in \Omega\setminus \left\{0\right\}$ (\cite[Theorem 6.8]{HNR}). Moreover (\cite[Lemma 7.2]{HNR}), $\ln\lambda(\tau)\sim -\tau R$ when $\tau\rightarrow +\infty$, and, when $n=1$, $\lambda(\tau)\sim \tau^2e^{-\tau R}$ when $\tau\rightarrow +\infty$. These asymptotics were also proved in \cite{friedman} by probabilistic arguments for more general elliptic operators of second order with $C^1$ coefficients (note that $\frac{x}{\left\vert x\right\vert}$ is not $C^1$ at $0$, so that the results of~\cite{friedman} do not exactly fall into the scope of the problems dealt with in the present paper).

This paper is chiefly devoted to the study of the asymptotic behavior of $\varphi_{\tau}$ as $\tau\!\rightarrow\!+\infty$ for {\it any} domain $\Omega\in\mathcal{O}$. Our first main result deals with the points where $\varphi_{\tau}$ reaches its maximum (recall that $\vp_{\tau}$ is continuous in $\overline{\O}$). When $\O$ is ball, as recalled before, this maximum is reached at the center of $\O$, {\it i.e.} the point in $\overline{\O}$ where $d$ reaches its maximum. In the general case, we establish:

\begin{theorem}\label{thmaxima}
Let $\Omega\in {\mathcal O}$. For all $\tau\geq 0$, let $x_{\tau}$ be a point in $\O$ where $\pt$ reaches its maximum in $\overline{\O}$. Then
$$d(x_\tau)\to \max_{\ol\O}d\quad\text{as }\tau\to+\infty.$$
\end{theorem}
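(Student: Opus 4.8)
The plan is to obtain matched upper and lower bounds on $\lambda(\tau)$ in terms of the in-radius $R^*:=\max_{\overline\O}d$, together with a localization argument showing that a large value of $\varphi_\tau$ cannot be sustained near a point far from where $d$ is nearly maximal. First I would establish the upper bound $\limsup_{\tau\to+\infty}\tau^{-1}\ln\lambda(\tau)\le -R^*$. To do this, pick $x_0\in\O$ with $d(x_0)=R^*$, so that $B_{R^*}(x_0)\subseteq\O$, and use monotonicity of the optimal eigenvalue with respect to the domain (a larger domain has smaller $\lambda(\tau)$, by taking the competitor vector field extended suitably, or directly from the variational characterization via $\lambda(\tau)\le\lambda_v$ for the field adapted to the ball): this gives $\lambda(\tau)\le\lambda_{B_{R^*}}(\tau)$, and then I invoke the known ball asymptotics $\ln\lambda_{B_{R^*}}(\tau)\sim -\tau R^*$ from \cite[Lemma 7.2]{HNR}, recalled in the excerpt.

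Next I would prove the complementary lower bound, localized near $x_\tau$: for any $\rho>0$, if $x_\tau$ is a maximum point of $\varphi_\tau$, then a ball $B_\rho(x_\tau)$ with $\rho\le d(x_\tau)$ lies in $\O$, and restricting $\varphi_\tau$ to this ball and comparing with the optimal problem on $B_\rho$ should yield $\lambda(\tau)\ge\lambda_{B_\rho(x_\tau)}(\tau)$ — the heuristic being that $\varphi_\tau$, normalized to have its max at the center, is a supersolution-type object on the sub-ball for the ball's own optimal operator. Concretely, on $B_\rho(x_\tau)$ the function $\varphi_\tau$ satisfies $-\Delta\varphi_\tau-\tau|\nabla\varphi_\tau|=\lambda(\tau)\varphi_\tau$ with $\varphi_\tau>0$ and $\varphi_\tau\le 1=\varphi_\tau(x_\tau)$; comparing with the radial optimal eigenfunction of $B_\rho$ via the characterization of $\lambda_{B_\rho}(\tau)$ as a min over positive supersolutions (using that $-\tau|\nabla w|$ dominates any $v\cdot\nabla w$ pointwise) gives $\lambda(\tau)\ge\lambda_{B_\rho(x_\tau)}(\tau)=\lambda_{B_\rho}(\tau)$, hence $\tau^{-1}\ln\lambda(\tau)\ge\tau^{-1}\ln\lambda_{B_\rho}(\tau)\to-\rho$.

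Combining the two bounds, for every $\rho$ with $\rho\le d(x_\tau)$ for all large $\tau$ we get $-\rho\le -R^*+o(1)$, i.e. $\rho\le R^*$; but since trivially $d(x_\tau)\le R^*$ always, the conclusion $d(x_\tau)\to R^*$ follows by taking $\rho=d(x_\tau)$ in the lower bound: $\tau^{-1}\ln\lambda(\tau)\ge -d(x_\tau)+o(1)$ combined with $\le -R^*+o(1)$ forces $\liminf d(x_\tau)\ge R^*$, and with the upper bound $d(x_\tau)\le R^*$ this gives the limit. I expect the main obstacle to be making the localized lower bound $\lambda(\tau)\ge\lambda_{B_\rho(x_\tau)}(\tau)$ rigorous: one must justify that the restriction of $\varphi_\tau$ to the sub-ball, which does \emph{not} vanish on $\partial B_\rho(x_\tau)$, can nonetheless be compared to the ball's optimal eigenpair — this likely requires either a careful sliding/rescaling argument, a barrier constructed from the explicit radial profile on $B_\rho$, or the sub/super-solution characterization of $\lambda_{B_\rho}(\tau)$ as $\sup\{\lambda:\exists\,w>0,\ -\Delta w-\tau|\nabla w|\ge\lambda w\ \text{in }B_\rho\}$, which is the form that makes the one-sided comparison with $\varphi_\tau$ work even without matching boundary values, provided one also controls $\varphi_\tau$ from below on a slightly smaller ball by Harnack-type estimates.
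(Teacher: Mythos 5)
The upper-bound part of your plan — using the inscribed ball $B_{R^*}(x_0)\subset\O$ and domain monotonicity to get $\lambda(\tau)\le\lambda_{B_{R^*}}(\tau)$, hence $\limsup_{\tau\to\infty}\tau^{-1}\ln\lambda(\tau)\le -R^*$ — is correct and is indeed one ingredient in the paper. The fatal gap is the claimed ``localized lower bound'' $\lambda(\tau)\ge\lambda_{B_\rho(x_\tau)}(\tau)$ for $\rho\le d(x_\tau)$. That inequality is not merely hard to make rigorous — it is backwards. Since $B_\rho(x_\tau)\subsetneq\O$, Proposition~\ref{pro:char} gives $\lambda_{B_\rho}(\tau)>\lambda_\O(\tau)=\lambda(\tau)$, i.e.\ the strict opposite. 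Your own proposed justification already reveals the sign error: you observe that $\varphi_\tau>0$ solves $-\Delta\varphi_\tau-\tau|\nabla\varphi_\tau|=\lambda(\tau)\varphi_\tau$ in $B_\rho(x_\tau)$, so it is a positive \emph{supersolution} there; plugging this into the $\max$-characterization of $\lambda_{B_\rho}(\tau)$ from~\eqref{char} yields $\lambda(\tau)\le\lambda_{B_\rho}(\tau)$, not $\ge$. (Also note: the $\min$ side of~\eqref{char} is over \emph{subsolutions vanishing on the boundary}, not supersolutions, so neither characterization produces the inequality in the direction you need — the restriction of $\varphi_\tau$ to a sub-ball carries no usable information about how small $\lambda(\tau)$ can be.)

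The paper takes an entirely different route that avoids any lower bound on $\lambda(\tau)$. It builds the explicit upper barrier $u_{\tau,\e}=1-e^{-\gamma\tau(d+\e)}$ with $\gamma>1$, proves via the semiconcavity of $d$ (Lemma~\ref{lem:dist}) that $-\Delta u_{\tau,\e}-\tau|\nabla u_{\tau,\e}|\ge\gamma\tau e^{-\gamma\tau(d+\e)}$ in the viscosity sense for $\tau$ large, and then slides $u_{\tau,\e}$ over $\varphi_\tau$: at the touching point one deduces $\gamma\tau e^{-\gamma\tau(d(x_{\tau,\e})+\e)}\le\lambda(\tau)$, which, combined only with the \emph{upper} estimate $\lambda(\tau)\le e^{-\tau R/\gamma}$, yields the pointwise bound $\varphi_\tau(x)\le(1-e^{-\gamma\tau d(x)})/(1-e^{-\tau R/\gamma})$. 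Since $\varphi_\tau(x_\tau)=1$, this forces $d(x_\tau)\ge R/\gamma^2$, and $\gamma\downarrow 1$ concludes. No comparison of $\lambda(\tau)$ with the eigenvalue of a sub-domain centred at $x_\tau$ is used, and none is available. If you want to salvage a self-contained argument, the barrier construction — and in particular the semiconcavity input that allows $d$ to be treated as a viscosity supersolution globally in $\O$, not just where it is $C^2$ — is the ingredient your proposal is missing.
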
  

We then prove that $\vp_{\tau}$ converges to $1$ as $\tau\rightarrow +\infty$ locally uniformly in $\Omega$, and give the precise asymptotic profile near $\partial\Omega$:

\begin{theorem}\label{thconv}
Let $\Omega\in {\mathcal O}$. There holds
\begin{equation}\label{profile}
\frac{\vp_\tau(x)}{1-e^{-\tau d(x)}}\to1\quad\text{as }\tau\to+\infty,
\end{equation}
uniformly with respect to $x\in\O$.
\end{theorem}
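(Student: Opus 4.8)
The plan is to sandwich $\varphi_\tau$ between the explicit barriers $c_\tau\bigl(1-e^{-\tau(1\pm\varepsilon)d}\bigr)$ and then let $\varepsilon\to0$. Write $\mathcal L u:=-\Delta u-\tau|\nabla u|-\lambda(\tau)u$, so that $\mathcal L\varphi_\tau=0$ and $\mathcal L$ is positively $1$-homogeneous. First I would record three facts. (i) \emph{Exponential decay of $\lambda(\tau)$}: for any ball $B_R\subset\Omega$, domain monotonicity gives $\lambda(\tau)\le\lambda(\tau;B_R)$, and since $\ln\lambda(\tau;B_R)\sim-\tau R$, one gets $\lambda(\tau)\le e^{-\tau R}$ for $\tau$ large and every $R<\max_{\overline\Omega}d$; in particular $\lambda(\tau)\to0$. (ii) \emph{Comparison on collars}: fix $\delta_0>0$ with $d\in C^2(\overline{\Omega^{\delta_0}})$, $|\nabla d|=1$ there, and $K:=\|\Delta d\|_{L^\infty(\Omega^{\delta_0})}$; for $0<\delta<\delta_0$ the operator $\mathcal L$ restricted to $\Omega^\delta$ has generalized principal eigenvalue $\lambda(\tau;\Omega^\delta)-\lambda(\tau)>0$ (again by domain monotonicity), so the comparison principle holds for $\mathcal L$ on $\Omega^\delta$; it \emph{fails} on $\Omega$ itself, where this eigenvalue is $0$, which is why the barriers must be confined to the collar. (iii) \emph{An elementary inequality}: for all $s>0$, $\frac{1-e^{-(1+\varepsilon)s}}{1-e^{-s}}\le1+\varepsilon$ and $\frac{1-e^{-(1-\varepsilon)s}}{1-e^{-s}}\ge1-\varepsilon$, both obtained by differentiating the corresponding one-variable function.

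For the upper bound, fix $\varepsilon\in(0,1)$ and $\delta\in(0,\delta_0)$ with $(1+\varepsilon)\delta<R$, and set $\Phi:=(1-e^{-\tau(1+\varepsilon)\delta})^{-1}\bigl(1-e^{-\tau(1+\varepsilon)d}\bigr)$ on $\Omega^\delta$. Using $|\nabla d|=1$ a direct computation gives
\[
\mathcal L\Phi=-\bigl(1-e^{-\tau(1+\varepsilon)\delta}\bigr)^{-1}(1+\varepsilon)\,\tau\,e^{-\tau(1+\varepsilon)d}\,(\Delta d-\tau\varepsilon)-\lambda(\tau)\Phi ;
\]
since $\Delta d\le K$ and $\lambda(\tau)e^{\tau(1+\varepsilon)\delta}\to0$ by (i), one has $\mathcal L\Phi\ge0$ in $\Omega^\delta$ for $\tau$ large. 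As $\Phi=0=\varphi_\tau$ on $\partial\Omega$ and $\Phi=1\ge\varphi_\tau$ on $\{d=\delta\}$, (ii) yields $\varphi_\tau\le\Phi$ in $\Omega^\delta$, hence by (iii) $\varphi_\tau/(1-e^{-\tau d})\le(1+\varepsilon)(1-e^{-\tau(1+\varepsilon)\delta})^{-1}$ on $\Omega^\delta$, while $\varphi_\tau/(1-e^{-\tau d})\le(1-e^{-\tau\delta})^{-1}$ on $\{d\ge\delta\}$; letting $\tau\to+\infty$ then $\varepsilon\to0$ gives $\limsup_\tau\sup_\Omega\varphi_\tau/(1-e^{-\tau d})\le1$.

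For the lower bound the main step is to show that $\varphi_\tau\to1$ locally uniformly in $\Omega$; since $\varphi_\tau$ is superharmonic (because $-\Delta\varphi_\tau=\tau|\nabla\varphi_\tau|+\lambda(\tau)\varphi_\tau\ge0$) one has $\min_{\{d\ge\delta\}}\varphi_\tau=\min_{\{d=\delta\}}\varphi_\tau$, so it is enough to prove $\varphi_\tau\to1$ uniformly on $\{d=\delta\}$. Testing the equation against a nonnegative $\eta\in C_c^\infty(\Omega)$ with $\eta\equiv1$ on a compact $Q\subset\Omega$ and using $0\le\varphi_\tau\le1$ gives $\tau\int_Q|\nabla\varphi_\tau|\le\int_\Omega\varphi_\tau(-\Delta\eta)\le\|\Delta\eta\|_{L^1}$, so $\|\nabla\varphi_\tau\|_{L^1(Q)}\to0$; hence $\varphi_\tau$ converges in $L^1_{\mathrm{loc}}(\Omega)$ to a constant, which I would argue equals $1$ using that $\max_{\overline\Omega}\varphi_\tau=1$ is attained at $x_\tau$ with $d(x_\tau)\to\max_{\overline\Omega}d$ (Theorem~\ref{thmaxima}) together with the superharmonicity of $\varphi_\tau$. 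Granting this, fix $\varepsilon,\delta$ as above so that $\varphi_\tau\ge1-\varepsilon$ on $\{d\ge\delta\}$ for $\tau$ large, and set $\Psi:=(1-\varepsilon)\bigl(1-e^{-\tau(1-\varepsilon)d}\bigr)$ on $\Omega^\delta$; the same computation gives $\mathcal L\Psi=-(1-\varepsilon)^2\tau e^{-\tau(1-\varepsilon)d}(\Delta d+\tau\varepsilon)-\lambda(\tau)\Psi\le0$ for $\tau>K/\varepsilon$. Since $\Psi=0\le\varphi_\tau$ on $\partial\Omega$ and $\Psi\le1-\varepsilon\le\varphi_\tau$ on $\{d=\delta\}$, (ii) gives $\varphi_\tau\ge\Psi$ in $\Omega^\delta$, so by (iii) $\varphi_\tau/(1-e^{-\tau d})\ge(1-\varepsilon)^2$ on $\Omega^\delta$ and $\ge\varphi_\tau\ge1-\varepsilon$ on $\{d\ge\delta\}$; letting $\tau\to+\infty$ then $\varepsilon\to0$ yields $\liminf_\tau\inf_\Omega\varphi_\tau/(1-e^{-\tau d})\ge1$, which with the upper bound proves \eqref{profile}. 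The hard part is precisely the identification of the $L^1_{\mathrm{loc}}$-limit as $1$: the gradient estimate only gives convergence to \emph{some} constant, and excluding that the limiting profile ``drops'' below $1$ away from the maximum requires combining superharmonicity with Theorem~\ref{thmaxima} (and some care with connectivity when the superlevel sets of $d$ are disconnected); a secondary subtlety is that the comparison principle for the eigenvalue operator is only available on the collar, the interior being handled by $\varphi_\tau\le1$ and the convergence just discussed.
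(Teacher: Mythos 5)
Your upper bound is correct and takes a somewhat different route from the paper's: rather than the viscosity/semiconcavity argument carried out on all of $\Omega$, you confine the explicit barrier to the $C^2$-collar $\Omega^\delta$ and invoke the comparison principle there, justified by the eigenvalue gap $\lambda(\tau;\Omega^\delta)>\lambda(\tau)$. (To apply comparison to the nonlinear $\mathcal L$ one should linearize the first-order term: from $|\nabla\varphi_\tau|-|\nabla\Phi|\ge\frac{\nabla\Phi}{|\nabla\Phi|}\cdot\nabla(\varphi_\tau-\Phi)$ one gets, for $w:=\varphi_\tau-\Phi$, an inequality $-\Delta w+\tilde v\cdot\nabla w-\lambda(\tau)w\le0$ with $\|\tilde v\|_\infty\le\tau$, and the linear operator $-\Delta+\tilde v\cdot\nabla-\lambda(\tau)$ has principal eigenvalue $\ge\lambda(\tau;\Omega^\delta)-\lambda(\tau)>0$ on the collar; the BNV maximum principle then applies.) This is a clean alternative to the paper, which instead derives the global pointwise bound $\varphi_\tau\le(1-e^{-\gamma\tau d})/(1-e^{-\tau R/\gamma})$ in the course of proving Theorem~\ref{thmaxima} and simply re-uses it.

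The lower bound, however, has a genuine gap, and you flag it yourself as ``the hard part'': you do not establish that $\varphi_\tau\to1$ locally uniformly. The estimate $\tau\int_Q|\nabla\varphi_\tau|\lesssim1$ (from testing $-\Delta\varphi_\tau\ge\tau|\nabla\varphi_\tau|$ against a cutoff) gives, via Poincar\'e--Wirtinger, $L^1_{\rm loc}$-convergence to \emph{some} constant $c\in[0,1]$, but none of the three tools you invoke pushes $c$ up to $1$. Superharmonicity goes in the wrong direction: $\varphi_\tau(x_\tau)\ge\fint_{\partial B_r(x_\tau)}\varphi_\tau$ turns $\varphi_\tau(x_\tau)=1$ into an \emph{upper} bound on spherical averages around $x_\tau$, and the minimum principle likewise only caps things from below by boundary values, which you don't control yet; the weak Harnack inequality for superharmonic functions gives $\inf_{B_r}\varphi_\tau\ge C^{-1}\bigl(\fint_{B_{2r}}\varphi_\tau^p\bigr)^{1/p}$, hence only $\liminf\inf\varphi_\tau\gtrsim c$ with a nontrivial constant $C^{-1}<1$. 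Since $W^{1,1}\not\hookrightarrow L^\infty$ in $n\ge2$, a spike of height $1$ concentrating at scale $1/\tau$ around $x_\tau$ over a plateau at height $c<1$ is consistent with everything you have. What rules this out in the paper is a blow-up: set $\psi_\tau(x):=\varphi_\tau(x_\tau+x/\tau)$, pass to the limit using interior elliptic estimates, and identify the limit $\psi_\infty$ (a bounded entire solution of $-\Delta\psi-|\nabla\psi|=0$ with an interior maximum equal to $1$) as $\psi_\infty\equiv1$ by the strong maximum principle; this yields $\varphi_\tau\to1$ uniformly on $B_{M/\tau}(x_\tau)$ for every $M$. That seed estimate, missing from your sketch, is then propagated to all compact subsets by the quantitative minimum-comparison of Lemma~\ref{lem:minima} (proved via an explicit radial barrier) together with a covering/chaining argument. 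You would need some replacement for this blow-up step; the $L^1$-gradient estimate plus superharmonicity alone cannot do the job.
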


Finally, we also describe the behavior of $\nabla\vp_{\tau}$ when $\tau\rightarrow +\infty$. In the case where $\Omega$ is a ball $B_R$, since $\vp_{\tau}$ is radial, $\frac{\nabla\vp_{\tau}(x)}{\left\vert \nabla \vp_{\tau}(x)\right\vert}=-\frac{x}{\left\vert x\right\vert}=\nabla d(x)$ for all $x\in \Omega\backslash\{0\}$. For a general domain $\Omega\in {\mathcal O}$, we compare $\frac{\nabla\vp_{\tau}}{\left\vert \nabla \vp_{\tau}\right\vert}$ with $\nabla d$ near $\partial\Omega$ when $\tau\rightarrow +\infty$. More precisely:

\begin{theorem}\label{thnabla}
Let $\Omega\in {\mathcal O}$. For any $M>0$, there holds
$$\min_{\ol{\O^{M/\tau}}}|\nabla\pt|\gtrsim\tau\ \ \hbox{ and }\ \ \max_{\ol{\O^{M/\tau}}}\,
\left|\frac{\nabla\pt}{|\nabla\pt|}-\nabla d\right|\to0\quad\text{as }\tau\to+\infty.\footnote{The notation $\min_{\ol{\O^{M/\tau}}}|\nabla\pt|\gtrsim\tau$ as $\tau\to+\infty$ means that there exist $C>0$ and $\tau_0>0$ such that $\min_{\ol{\O^{M/\tau}}}|\nabla\pt|\ge C\tau$ for all $\tau\ge\tau_0$.}$$
\end{theorem}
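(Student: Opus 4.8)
The plan is to reduce the problem to a local analysis near a boundary point and exploit the precise profile \eqref{profile} from Theorem~\ref{thconv}, together with elliptic estimates on rescaled functions. Fix $M>0$ and let $\tau$ be large. The starting point is that near $\partial\O$ the distance function $d$ is of class $C^2$ (since $\partial\O\in C^2$): there is $\delta_0>0$ such that $d\in C^2(\ol{\O^{\delta_0}})$, $|\nabla d|=1$ in $\O^{\delta_0}$, and for $\tau$ large enough $M/\tau<\delta_0$. For each $y\in\partial\O$ and each $x\in\O^{M/\tau}$ close to $y$, introduce the rescaled variable $z=\tau(x-y)$ and the rescaled function $\psi_\tau(z):=\vp_\tau(y+z/\tau)$. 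From \eqref{evm}, $\psi_\tau$ solves $-\Delta\psi_\tau-|\nabla\psi_\tau|=\tau^{-2}\lambda(\tau)\psi_\tau$ in the rescaled domain, and since $\tau^{-2}\lambda(\tau)\to0$ (because $\lambda(\tau)\to0$, as follows e.g. from the monotonicity $\lambda(\tau)\le\lambda(0)$ applied after comparison, or directly from \eqref{profile}), the right-hand side is uniformly small on bounded sets. The rescaled domain converges, as $\tau\to+\infty$, to the half-space $H_y=\{z:z\cdot\nabla d(y)>0\}$ (using $\partial\O\in C^2$, the boundary flattens in the $C^1$ sense after rescaling by $\tau$, with curvature contributing an $O(1/\tau)$ error).

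Next I would pass to the limit. On bounded subsets of $H_y$, the functions $\psi_\tau$ are bounded (by $1$) and, by interior and boundary $W^{2,p}$ estimates for the uniformly elliptic equation $-\Delta\psi_\tau=|\nabla\psi_\tau|+o(1)$ together with the gradient bound $|\nabla\psi_\tau|=\tau^{-1}|\nabla\vp_\tau|\le C$ coming from global $C^{1,\alpha}$ bounds (uniform in $\tau$? — here one must be careful, see below), one extracts a subsequence converging in $C^1_{loc}(\ol{H_y})$ to a function $\psi_\infty$ solving $-\Delta\psi_\infty-|\nabla\psi_\infty|=0$ in $H_y$, $\psi_\infty=0$ on $\partial H_y$. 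On the other hand, Theorem~\ref{thconv} identifies $\psi_\infty$ pointwise: for $x=y+z/\tau$ one has $d(x)=z\cdot\nabla d(y)/\tau+O(1/\tau^2)=:s/\tau+O(1/\tau^2)$ where $s=z\cdot\nabla d(y)\ge0$, so $1-e^{-\tau d(x)}\to1-e^{-s}$, and \eqref{profile} forces $\psi_\infty(z)=1-e^{-z\cdot\nabla d(y)}$. In particular $\nabla\psi_\infty(z)=\nabla d(y)\,e^{-z\cdot\nabla d(y)}$, so $|\nabla\psi_\infty|=e^{-z\cdot\nabla d(y)}>0$ everywhere on $\ol{H_y}$, and $\nabla\psi_\infty/|\nabla\psi_\infty|=\nabla d(y)$. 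Undoing the scaling, on $\ol{\O^{M/\tau}}$ this gives $|\nabla\vp_\tau(x)|=\tau|\nabla\psi_\tau(z)|\gtrsim\tau\,e^{-M}\gtrsim\tau$ (uniformly, since $z\cdot\nabla d(y)\le|z|\le M$ there), and $\nabla\vp_\tau(x)/|\nabla\vp_\tau(x)|=\nabla\psi_\tau(z)/|\nabla\psi_\tau(z)|\to\nabla d(y)=\nabla d(x)+O(1/\tau)$, which is the claimed convergence. A compactness argument over $y\in\partial\O$ (which is compact) upgrades the pointwise/subsequential statements to the uniform ones, since any subsequence has a further subsequence along which the convergence holds with the same limit.

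The main obstacle is establishing a \emph{uniform-in-$\tau$} gradient bound $|\nabla\vp_\tau|\le C\tau$ on $\ol{\O^{M/\tau}}$ that makes the rescaled functions $\psi_\tau$ precompact in $C^1_{loc}$: the equation \eqref{evm} has the non-Lipschitz, $\tau$-dependent term $-\tau|\nabla\vp_\tau|$, so naive Schauder/$L^p$ estimates lose powers of $\tau$. I would handle this by a barrier/sliding argument: near a boundary point, the radial profile on an interior-tangent ball of fixed radius (for which the one-dimensional ODE analysis of \cite[Section 7]{HNR} gives explicit control, with $\vp\sim 1-e^{-\tau d}$ and gradient $\sim\tau e^{-\tau d}$) is a supersolution, and a similar exterior-tangent construction gives a subsolution; comparison then sandwiches $\vp_\tau$ between two functions whose gradients are $O(\tau)$ at scale $1/\tau$, and a Harnack-type / interior gradient estimate applied to $\psi_\tau=\vp_\tau(y+\cdot/\tau)$ — which solves a \emph{uniformly} elliptic equation with bounded right-hand side and bounded solution — yields the needed equicontinuity of $\nabla\psi_\tau$. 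Once this precompactness is in hand, the identification of every limit via Theorem~\ref{thconv} is immediate and the rest is routine.
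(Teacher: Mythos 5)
Your overall plan is the same as the paper's: rescale $\vp_\tau$ by $1/\tau$ around boundary points, use standard elliptic theory to extract a $C^1_{loc}$ limit in a half-space, identify that limit as $1-e^{-z\cdot\nabla d(y)}$ via Theorem~\ref{thconv}, and deduce both assertions by contradiction and compactness. The paper in fact proceeds by first proving a lemma (Lemma~\ref{convprofile}) showing $\varphi_{\tau_k}(y_k+x/\tau_k)\to1-e^{x\cdot\nu(y)}$ for varying $y_k\to y$, and then runs exactly the contradiction argument you sketch.

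The one place where your proposal genuinely misjudges the situation is the ``main obstacle'' paragraph. You worry that ``naive Schauder/$L^p$ estimates lose powers of $\tau$'' and propose a barrier/sliding plus Harnack machinery to obtain uniform $C^1$ compactness. This is unnecessary, and the worry is in fact unfounded once you look at the rescaled equation correctly. Setting $\psi_\tau(z)=\vp_\tau(y_\tau+z/\tau)$ turns~\eqref{evm} into $-\Delta\psi_\tau-|\nabla\psi_\tau|=\tau^{-2}\lambda(\tau)\psi_\tau$ in $\Omega_\tau=\tau(\Omega-y_\tau)$, with $\psi_\tau=0$ on $\partial\Omega_\tau$. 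Every trace of $\tau$ in the coefficients has cancelled: writing the first-order term as $b_\tau\cdot\nabla\psi_\tau$ with $b_\tau=-\nabla\psi_\tau/|\nabla\psi_\tau|$ (and $b_\tau=0$ where $\nabla\psi_\tau=0$) gives $\|b_\tau\|_\infty\le1$, and the zero-order coefficient $\tau^{-2}\lambda(\tau)\to0$. Since $0\le\psi_\tau\le1$ and the rescaled boundaries $\partial\Omega_\tau\cap B_R$ are uniformly $C^2$, the standard interior and up-to-the-boundary $W^{2,p}$ estimates (whose constants depend only on the $L^\infty$ size of the coefficients, not on their regularity) give uniform $W^{2,p}_{loc}$ and hence $C^{1,\alpha}_{loc}(\ol H)$ bounds on $\psi_\tau$ directly, with a subsequent bootstrap to $C^{2,\alpha}_{loc}(H)$ in the interior. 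No a priori gradient bound, no barrier construction, no Harnack inequality is needed; the paper does precisely this.

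A second, smaller issue: you describe rescaling around a fixed $y\in\partial\O$ and appeal to compactness of $\partial\O$ afterwards. In the contradiction argument you need the rescaled bad points $z_k=\tau_k(x_k-\cdot)$ to stay bounded; this requires centering the rescaling at the varying nearest-boundary points $y_k$ of the bad sequence $x_k$, with $y_k\to y$ up to a subsequence (so the rescaled domains converge to the half-space determined by $\nu(y)$). Rescaling around the limit $y$ itself could send $z_k=\tau_k(x_k-y)$ off to infinity if $\tau_k|y_k-y|$ is unbounded. Your closing remark about ``any subsequence has a further subsequence'' suggests you would naturally arrive at this fix, but it should be made explicit. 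Similarly, your first-order Taylor expansion $\tau d(y+z/\tau)\to z\cdot\nabla d(y)$ must be done with a moving base point $y_k\to y$, which the paper handles carefully (Lemma~\ref{convprofile}) via the projection of $y_k+x/\tau_k$ onto $\partial\O$.

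Modulo these points, your identification of the half-space limit and the final step (lower bound $|\nabla\psi_\infty|=e^{-z\cdot\nabla d(y)}\ge e^{-M}>0$ on $|z|\le M$, and $\nabla\psi_\infty/|\nabla\psi_\infty|=\nabla d(y)$) is correct and matches the paper.
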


We conjecture that, for a general domain $\O\in\mathcal{O}$, the vector fields $\nabla\pt/|\nabla\pt|$ 
converge to $\nabla d$ a.e.~in $\O$ as $\tau\to+\infty$, that is, the minimizing vector fields $v_{\tau}$ given by~\eqref{vmin} are asymptotically proportional to the opposite of the gradient of the distance to $\partial\O$. That problem is still open for general domains $\O$. However, the property holds when $\O$ is an annulus, say with center $0$, that is, $\O=B_R\setminus\overline{B_r}$ for some $0<r<R$.

\begin{theorem}\label{thannulus}
	Let $n\ge2$, $0<r<R$ and $\O=B_R\setminus\overline{B_r}$. Then, for every $\tau\ge0$, the principal eigenfunction~$\vp_\tau$ solving~\eqref{evm} is radially symmetric, i.e., $\pt(x)=\Phi_\tau(|x|)$ in $\ol\O$ for some function $\Phi_\tau:[r,R]\to[0,1]$, and
	$$-\frac{v_{\tau}(x)}{\tau}=\frac{\nabla\pt(x)}{|\nabla\pt(x)|}\to\nabla d(x)\ \hbox{ as }\tau\to+\infty\ \hbox{ for all }x\in\overline{\O}\hbox{ with }|x|\neq\frac{r+R}{2}.$$
\end{theorem}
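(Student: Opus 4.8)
The plan is to exploit the rotational symmetry of the annulus together with the uniqueness of the normalized minimizer $\vp_\tau$. First I would establish radial symmetry: for any rotation $\rho\in O(n)$, the function $\vp_\tau\circ\rho$ solves \eqref{evm} in $\rho^{-1}(\O)=\O$ with the same $\lambda(\tau)$ and the same normalization $\max_{\ol\O}=1$, since $|\nabla(\vp_\tau\circ\rho)|=|\nabla\vp_\tau|\circ\rho$ and $\Delta$ commutes with rotations. By the uniqueness statement recalled after \eqref{evm}, $\vp_\tau\circ\rho=\vp_\tau$ for every $\rho\in O(n)$, hence $\vp_\tau(x)=\Phi_\tau(|x|)$ for some $\Phi_\tau:[r,R]\to[0,1]$, which is therefore $C^2$ on $(r,R)$ and vanishes at the endpoints. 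Writing $s=|x|$, the equation \eqref{evm} becomes the ODE $-\Phi_\tau''-\frac{n-1}{s}\Phi_\tau'-\tau|\Phi_\tau'|=\lambda(\tau)\Phi_\tau$ on $(r,R)$ with $\Phi_\tau(r)=\Phi_\tau(R)=0$, and by \eqref{vmin} one has $-v_\tau(x)/\tau=\nabla\vp_\tau(x)/|\nabla\vp_\tau(x)|=\sign(\Phi_\tau'(s))\,x/|x|$ wherever $\Phi_\tau'(s)\neq0$.

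The next step is to locate where $\Phi_\tau$ is increasing and where it is decreasing. Since $\Phi_\tau>0$ inside and vanishes at both endpoints, $\Phi_\tau$ must increase near $s=r$ and decrease near $s=R$; I would argue that $\Phi_\tau$ is in fact unimodal, with a single maximum at some interior point $s_\tau\in(r,R)$, so that $\sign\Phi_\tau'(s)=+1$ for $s<s_\tau$ and $-1$ for $s>s_\tau$. This can be seen from the structure of the ODE: on any interval where $\Phi_\tau'>0$ the equation reads $\Phi_\tau''=-(\frac{n-1}{s}+\tau)\Phi_\tau'-\lambda(\tau)\Phi_\tau<0$, so $\Phi_\tau'$ is strictly decreasing there, and similarly a careful sign analysis rules out multiple critical points; alternatively, one notes that on $\{|x|\neq s_\tau\}$ we have $\nabla d(x)=-x/|x|$ for $|x|>(r+R)/2$ (i.e. $d(x)=R-|x|$) and $\nabla d(x)=x/|x|$ for $|x|<(r+R)/2$ (i.e. $d(x)=|x|-r$), so the desired limit is exactly the assertion that $s_\tau\to(r+R)/2$ as $\tau\to+\infty$: once this is known, for fixed $x$ with $|x|<(r+R)/2$ we get $|x|<s_\tau$ for $\tau$ large, hence $\sign\Phi_\tau'(|x|)=+1$ and $\nabla\vp_\tau(x)/|\nabla\vp_\tau(x)|=x/|x|=\nabla d(x)$, and symmetrically for $|x|>(r+R)/2$.

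It therefore remains to prove $s_\tau\to(r+R)/2$. Here I would invoke Theorem \ref{thmaxima}: $x_\tau$ is a maximum point of $\vp_\tau$, so $|x_\tau|=s_\tau$, and $d(x_\tau)\to\max_{\ol\O}d=(R-r)/2$, the maximum of $d$ over the annulus being attained precisely on the mid-sphere $\{|x|=(r+R)/2\}$. Since $d(x_\tau)=\min(|x_\tau|-r,\,R-|x_\tau|)=\min(s_\tau-r,R-s_\tau)$, the convergence $\min(s_\tau-r,R-s_\tau)\to(R-r)/2$ forces $s_\tau\to(r+R)/2$. Combining this with the monotonicity structure of $\Phi_\tau$ gives the claimed pointwise convergence of $-v_\tau(x)/\tau=\nabla\vp_\tau(x)/|\nabla\vp_\tau(x)|$ to $\nabla d(x)$ for every $x\in\ol\O$ with $|x|\neq(r+R)/2$.

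The main obstacle I anticipate is the rigorous verification of unimodality of $\Phi_\tau$ — i.e., that $\Phi_\tau$ has exactly one interior critical point, so that "max point near the mid-sphere" genuinely pins down the sign of $\Phi_\tau'$ on the whole of each subinterval $\{|x|<(r+R)/2\}$ and $\{|x|>(r+R)/2\}$ for large $\tau$. One must handle the non-smoothness of the $\tau|\Phi_\tau'|$ term at critical points and rule out, say, an interior local minimum of a positive solution; the sign analysis of the linear ODE satisfied by $\Phi_\tau$ on each monotonicity interval (using $\lambda(\tau)>0$ and $\Phi_\tau>0$) should close this, but it is the step requiring the most care. Everything else is a direct application of the uniqueness of $\vp_\tau$, the explicit form of $d$ on the annulus, and Theorem \ref{thmaxima}.
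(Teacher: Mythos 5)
Your proposal follows essentially the same route as the paper: radial symmetry from uniqueness of the minimizer under rotations, the ODE $-\Phi_\tau''-\frac{n-1}{\rho}\Phi_\tau'-\tau|\Phi_\tau'|=\lambda(\tau)\Phi_\tau$, unimodality of $\Phi_\tau$ from the sign observation $\Phi_\tau''<0$ at interior critical points (the cleanest way to package your ``careful sign analysis''), and Theorem~\ref{thmaxima} to force the unique critical radius to converge to $(r+R)/2$. The only small point the paper adds that you omit is an appeal to the Hopf lemma to get $\Phi_\tau'(r)>0$ and $\Phi_\tau'(R)<0$, which is needed so that $\nabla\pt$ is nonzero at the two boundary spheres (the theorem is stated for $x\in\overline{\O}$, not just $x\in\O$).
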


Before going into the proofs, let us give some heuristic analytic and probabilistic interpretations of these main results. On the one hand, minimizing the principal eigenvalue $\lambda_v$ of~\eqref{ev} means minimizing the deleterious effect of the Dirichlet boundary condition, which makes the solutions of the related Cauchy problem
\begin{equation}\label{Cauchy}
u_t+v\cdot\nabla u=\Delta u,\qquad\!\left.\!\hspace{1pt}u\right|_{\partial\Omega}=0,
\end{equation}
converge to $0$ as $t\to+\infty$ with at least a rate of the type $e^{-\lambda_vt}$. The solutions diffuse and are transported along the vector field $v$. When $v$ points from the boundary to the center of the domain, the boundary value $0$ tends to be propagated inside the domain, making the solutions converge to $0$ as $t\to+\infty$ with a faster decay rate $\lambda_v$. Therefore, in order to minimize these negative effects, the vector fields $v_{\tau}=-\tau\nabla\pt/|\nabla\pt|$ minimizing~\eqref{lt} should better point towards the boundary. Our results make this formal statement rigorous and quantitative: the minimizing vector fields have the largest possible magnitude and their direction is parallel and opposite to that of the gradient of the distance to the boundary in neighborhoods of the boundary. The size of the boundary layer is precisely estimated and the asymptotic profile is found. Furthermore, outside of this boundary layer, the eigenfunctions become approximately constant and their maximal points are as far as possible from the boundary.

On the other hand, \eqref{Cauchy} coincides with the Kolmogorov equation of the stochastic differential equation
$$dX(t)=-v\,dt+\sqrt 2\, dW(t),$$
subject to {\em absorbtion} on $\partial\Omega$. Namely, the solution of \eqref{Cauchy} with a nonnegative initial datum $u_0$ is given by
$$u(x,t)=\mathbb{E}[u_0(X(t))\, |\, X(0)=x],$$
in which the contribution of the trajectories $X$ hitting the boundary of $\Omega$ before the time $t$ is equal to $0$. Thus in order to minimize the decay rate $\lambda_v$ of $u$ one should prevent the trajectories from hitting $\partial\Omega$. One could think that the best strategy to do this would be taking $-v$ pointing in the direction opposed to the closest point of the boundary at the maximum allowed intensity $\tau$, that is, $-v_\tau=\tau\nabla d$. This is not the case because the presence of the Brownian motion ``deviates'' the trajectory from the imposed drift $-v$ and makes the optimization problem a nonlocal one which depends on the whole boundary of $\Omega$. However, as $\tau$ goes to $+\infty$, the influence of the Brownian motion becomes negligible compared with the drift, leading to the conjecture that~$-v_\tau$ aligns with $\nabla d$ in the limit.

Let us now quickly describe the proofs. For Theorem~\ref{thmaxima}, we compare $\vp_{\tau}$ with the function $u(x):=1-e^{-\gamma\tau(d(x)+\varepsilon)}$, where $\gamma>1$ and $\varepsilon>0$. Namely, for all $\tau$ large enough, there holds $-\Delta u-\tau\left\vert \nabla u\right\vert\ge\gamma\tau e^{-\gamma\tau(d+\varepsilon)}$ in the viscosity sense in $\O$. Then, looking at the points where $\frac{\vp_{\tau}}{u}$ reaches its maximum in~$\O$, it follows from the asymptotic behavior of $\lambda(\tau)$ recalled above that, $\vp_\tau\leq \frac{1-e^{-\gamma\tau d}}{1-e^{-\tau R/\gamma}}$ in $\Omega$ for $\tau$ large enough, with $R:=\max_{\overline{\O}}d$. This yields the desired conclusion, since the maximum of $\vp_{\tau}$ in $\overline{\O}$ is $1$. The major ingredient in this argument is the semiconcavity of $d$ (see Section~\ref{loc} below).

As far as Theorem~\ref{thconv} is concerned, one readily deduces from the inequality 
$\vp_\tau(x)\leq \frac{1-e^{-\gamma\tau d(x)}}{1-e^{-{\tau R/\gamma}}}$ seen before that
$$\limsup_{\tau\to+\infty}\left(\sup_{x\in\O}\frac{\pt(x)}{1-e^{-\tau d(x)}}\right)\leq1.$$
In order to derive a corresponding lower bound, 
a preliminary step is to prove that $\vp_{\tau}$ converges to~$1$ as $\tau\rightarrow +\infty$ locally uniformly in $\Omega$. This is first shown in a neighborhood of $x_\tau$ where $\vp_{\tau}(x_\tau)=1$, by means of a blow-up of the function $\vp_{\tau}$ around $x_{\tau}$ that can be performed thanks to Theorem \ref{thmaxima}.
Next we propagate this convergence result up to a boundary layer of width of order $1/\tau$
using a technical lemma about the variation of the minimum on concentric spheres,
together with a covering argument.
The convergence result in the whole $\Omega$ is eventually obtained by comparison with 
a lower barrier function of the type $x\mapsto 1-e^{-\eta\tau(d(x)-\e)}$.

Finally, for the proof of Theorem~\ref{thnabla}, we argue by contradiction, using a rescaling procedure of $\vp_{\tau}$ again.

In the present paper we consider the problem of minimizing the principal eigenvalue 
$\lambda_v$ in \eqref{ev}, but one could also be interested in maximizing such eigenvalue. 
That is, one could define $\ol{\lambda}(\tau)$ by replacing the ``$\inf$'' with a ``$\sup$'' in 
\eqref{lt}. It turns out that also $\ol{\lambda}(\tau)$ is attained by 
a unique vector field $\ol{v}_\tau$, with the principal eigenfunctions $\ol{\varphi}_\tau$
satisfying \eqref{vmin} and \eqref{evm} with $-\tau$ replaced by $\tau$.
In such case we conjecture that, after the $L^\infty$ normalization, 
$\ol{\varphi}_\tau$ converges as $\tau\to+\infty$ locally uniformly to $0$ in the complement of
the set where $d$ reaches its maximum.

The paper is organized as follows. Some basic properties of $\lambda(\tau)$ are given in Section~\ref{prel}. We show Theorem~\ref{thmaxima} in Section~\ref{loc}. Section~\ref{convfar} is devoted to the proof of the local uniform convergence of $\vp_{\tau}$ to $1$, and the proof of Theorem~\ref{thconv} is completed in Section~\ref{asymprof}. Section~\ref{asympgrad} is concerned with the proof of Theorem~\ref{thnabla}. We complete the paper with Section~\ref{secradial} and the proof of Theorem~\ref{thannulus} in the case when $\O$ is an annulus.


\SE{Preliminary properties of $\lt$}\label{prel}

We show in this section some basic properties of the minimizing principal eigenvalue~$\lt$, which will be used in the next sections.

\begin{proposition}\label{pro:char}
There holds
\Fi{char}
\begin{split}
\lt &=\max\big\{\lambda\ : \exists\, \vp\in C^2(\O),\ \vp>0 \text{ and } -\Delta\vp-\tau|\nabla\vp|\geq\lambda\vp \text{ in }\O\big\}\vspace{3pt}\\
&=\min\big\{\lambda\ : \exists\, \vp\in C^2(\O)\cap C^1(\ol\O),\ \vp>0\text{ in }\O,\vspace{3pt}\\ 
& \qquad\qquad\qquad\qquad\ -\Delta\vp-\tau|\nabla\vp|\leq\lambda\vp \text{ in }\O\text{ and }\vp=0 \text{ on }\partial\O\big\}.	
\end{split}
\Ff
Moreover, the $\max$ and the $\min$ are achieved only by $\vp=\pt$ $($up to scalar multiples$)$, and $\lt$ is strictly decreasing with respect to $\tau$ as well as to the inclusion of domains~$\O$.
\end{proposition}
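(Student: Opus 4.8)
The plan is to derive~\eqref{char} from the classical variational characterizations of the principal eigenvalue $\lambda_w$ of a linear operator $-\Delta+w\.\nabla$ in the bounded domain~$\O$ --- namely, that $\lambda_w$ is the largest $\lambda$ for which $-\Delta+w\.\nabla-\lambda$ admits a positive supersolution in~$\O$, and the smallest $\lambda$ for which it admits a positive subsolution in~$\O$ vanishing on $\partial\O$ (see~\cite{BNV}) --- together with the elementary pointwise identity $\min_{|w|\le\tau}w\.p=-\tau|p|$ for $p\in\R^n$, the equality $\lt=\lambda_{v_\tau}$, and the uniqueness of the minimizing field $v_\tau$ from~\cite[Theorem~6.6]{HNR}. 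As a preliminary observation, since $\pt$ solves~\eqref{evm}, belongs to $C^2(\O)\cap C^1(\ol\O)$, is positive in~$\O$ and vanishes on $\partial\O$, it is itself admissible in both sets appearing in~\eqref{char}, with associated value $\lt$; hence the supremum in the first line is $\ge\lt$ and the infimum in the second line is $\le\lt$, so only the two reverse inequalities and the rigidity of the extremizers remain to be proved.

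For the ``$\max$'' line, I would take $\vp\in C^2(\O)$ positive with $-\Delta\vp-\tau|\nabla\vp|\ge\lambda\vp$ in~$\O$ and use Cauchy--Schwarz together with $|v_\tau|\le\tau$ a.e.\ to get $v_\tau\.\nabla\vp\ge-\tau|\nabla\vp|$ a.e., whence $-\Delta\vp+v_\tau\.\nabla\vp\ge\lambda\vp$ a.e.\ in~$\O$; thus $\vp$ is a positive supersolution of $-\Delta+v_\tau\.\nabla-\lambda$ in~$\O$, forcing $\lambda\le\lambda_{v_\tau}=\lt$, and in the equality case $\vp$ is a positive supersolution of $-\Delta+v_\tau\.\nabla$ at the level of its principal eigenvalue $\lambda_{v_\tau}$, hence a positive multiple of $\pt$ by the simplicity of that eigenvalue. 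For the ``$\min$'' line, I would take $\vp\in C^2(\O)\cap C^1(\ol\O)$ positive in~$\O$, vanishing on $\partial\O$, with $-\Delta\vp-\tau|\nabla\vp|\le\lambda\vp$ in~$\O$, and set $v:=-\tau\,\nabla\vp/|\nabla\vp|$ on $\{\nabla\vp\neq0\}$ and $v:=0$ elsewhere: then $v$ is measurable, $\|v\|_\infty\le\tau$, and $v\.\nabla\vp=-\tau|\nabla\vp|$ in~$\O$, so $\vp$ is a positive subsolution of $-\Delta+v\.\nabla-\lambda$ vanishing on $\partial\O$; were $\lambda<\lambda_v$, the operator $-\Delta+v\.\nabla-\lambda$ would have a positive principal eigenvalue, hence satisfy the maximum principle, forcing $\vp\le0$ in~$\O$, a contradiction. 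Therefore $\lambda\ge\lambda_v\ge\lt$, and in the equality case $\lambda_v=\lt$, so $v$ is a minimizing field, $v=v_\tau$ by~\cite[Theorem~6.6]{HNR}, and $\vp$ is a positive subsolution of $-\Delta+v_\tau\.\nabla-\lt$ vanishing on $\partial\O$, hence again a positive multiple of $\pt$ by simplicity of the principal eigenvalue.

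The strict monotonicity would then follow from~\eqref{char} and the rigidity just obtained. If $0\le\tau_1<\tau_2$, then $\varphi_{\tau_2}>0$ satisfies, in~$\O$,
$$-\Delta\varphi_{\tau_2}-\tau_1|\nabla\varphi_{\tau_2}|=\lambda(\tau_2)\,\varphi_{\tau_2}+(\tau_2-\tau_1)\,|\nabla\varphi_{\tau_2}|\ge\lambda(\tau_2)\,\varphi_{\tau_2},$$
so the ``$\max$'' characterization at $\tau_1$ gives $\lambda(\tau_2)\le\lambda(\tau_1)$; if equality held, rigidity would force $\varphi_{\tau_2}=c\,\varphi_{\tau_1}$ for some $c>0$, and substituting this into the equations~\eqref{evm} for $\varphi_{\tau_1}$ and $\varphi_{\tau_2}$ would give $(\tau_2-\tau_1)|\nabla\varphi_{\tau_1}|\equiv0$, i.e.\ $\varphi_{\tau_1}$ constant in~$\O$, contradicting $\varphi_{\tau_1}=0$ on $\partial\O$ and $\varphi_{\tau_1}>0$ in~$\O$. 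Likewise, for two domains $\O_1\subsetneq\O_2$ in $\mathcal O$, the restriction to~$\O_1$ of the principal eigenfunction of~$\O_2$ is a positive $C^2(\O_1)$ function solving --- hence supersolving --- the corresponding equation in~$\O_1$ with $\lambda$ equal to the minimizing eigenvalue of~$\O_2$; the ``$\max$'' characterization in~$\O_1$ then makes that eigenvalue at most the minimizing eigenvalue of~$\O_1$, and in the equality case the said restriction would be a positive multiple of the principal eigenfunction of~$\O_1$ (which vanishes on $\partial\O_1$), whereas the principal eigenfunction of~$\O_2$ is positive at every point of $\partial\O_1\cap\O_2$ --- a set which is nonempty since $\O_2$ is connected --- a contradiction.

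The step I expect to require the most care is not the essentially algebraic reduction of the nonlinear inequalities to linear ones, but the invocation, in sufficient generality, of the linear principal-eigenvalue theory for non-divergence-form operators with merely bounded measurable drift: specifically, the two variational characterizations and the maximum-principle equivalence for $-\Delta+v\.\nabla-\lambda$, and a simplicity statement for $-\Delta+v_\tau\.\nabla$ applying to positive supersolutions in~$\O$ that carry \emph{no} prescribed boundary condition (which is what underlies the rigidity used in the ``$\max$'' line).
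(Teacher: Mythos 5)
Your proposal is correct and follows essentially the same route as the paper: reduce the nonlinear super/subsolution inequalities to the linear ones for $-\Delta+v\.\nabla$ (via $v_\tau\.\nabla\vp\ge-\tau|\nabla\vp|$ in the max line and $v:=-\tau\nabla\vp/|\nabla\vp|$ in the min line), invoke the Berestycki--Nirenberg--Varadhan characterizations with their rigidity, use $\lt=\lambda_{v_\tau}$ and the uniqueness of $v_\tau$ from \cite{HNR}, and then deduce strict monotonicity from the rigidity. The only cosmetic difference is that you pass through the maximum-principle equivalence where the paper cites the BNV subsolution characterization directly, and you phrase the strict $\tau$-monotonicity as a contradiction from $\varphi_{\tau_2}=c\varphi_{\tau_1}$ rather than from the a.e.\ strict inequality in the supersolution test, but both amount to the same use of $|\nabla\pt|>0$ a.e.
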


\begin{proof}
Let $\lambda\in\R$ and $\vp\in C^2(\O)$ be such that $\vp>0$ and
$$-\Delta\vp-\tau|\nabla\vp|\geq\lambda\vp\ \hbox{ in }\O.$$
Then, for any $v$ such that $\| v\|_{L^\infty(\O)}\leq\tau$ there holds
$$-\Delta\vp+v\.\nabla\vp\geq\lambda\vp\quad \text{ in }\O.$$
By~\cite{BNV}, one knows that
$$\lambda_v=\max\big\{\mu\ : \exists\, \phi\in C^2(\O),\ \phi>0 \text{ and } -\Delta\phi+v\cdot\nabla\phi\geq\mu\phi \text{ in }\O\big\},$$
hence $\lambda\leq \lambda_v$, and that the
equality $\lambda=\lambda_v$ holds if and only if $\vp$ is the principal eigenfunction of~\eqref{ev}. Taking $v=v_\tau$ given by~\eqref{vmin} we get $\lambda\le\lambda_{v_\tau}=\lt$
and then using the fact that the function $\pt$ satisfies~\eqref{evm} we derive the first equality in~\eqref{char}. 
Moreover, $\lambda<\lt$ unless $\vp$ coincides with $\pt$ up to a scalar multiple.

The first equality in~\eqref{char} also implies that $\lt$ is non-increasing with respect to both $\tau$ and the inclusion of domains $\O$ (the former also immediately follows from the definition~\eqref{lt}). Consider now $\tau'\in\R$ such that $\tau'>\tau$ and assume by contradiction that $\lambda(\tau')=\lambda(\tau)$. Since $|\nabla\vp_{\tau'}|>0$ a.e. in $\O$, one then gets
\be\label{strict}
-\Delta\vp_{\tau'}-\tau|\nabla\vp_{\tau'}|>-\Delta\vp_{\tau'}-\tau'|\nabla\vp_{\tau'}|=\lambda(\tau')\,\vp_{\tau'}=\lambda(\tau)\,\vp_{\tau'}\hbox{ a.e. in }\Omega.
\ee
Hence, $\vp:=\vp_{\tau'}$ is a supersolution of~\eqref{evm} with $\lambda:=\lambda(\tau)$. The conclusion of the previous paragraph implies that $\vp_{\tau'}$ and $\pt$ must coincide up to a scalar multiple, which contradicts the strict inequality in~\eqref{strict}. Therefore, $\lambda(\tau')<\lt$. Similarly, the solution $\vp'$ of the nonlinear eigenvalue problem~\eqref{evm} with $\O$ replaced by a (connected) domain $\O'\supsetneq\O$ and with principal eigenvalue $\lambda'$ satisfies $-\Delta\vp'-\tau|\nabla\vp'|=\lambda'\vp'$ in~$\Omega$. Since $\max_{\partial\O}\vp'>0$, $\vp'$ does not coincide with $\vp_\tau$ in $\O$ up to a scalar multiple and we conclude from the previous paragraph that $\lambda'<\lt$. This means that the monotonicities of $\lt$ with respect to both $\tau$ and $\O$ are strict.

For the second equality in~\eqref{char}, consider $\lambda\in\R$ and $\vp\in C^2(\Omega)\cap C^1(\overline{\Omega})$ be such that $\vp>0$ in $\O$, $-\Delta\vp-\tau|\nabla\vp|\leq\lambda\vp$ in $\O$ and $\vp=0$ on $\partial\O$. Define 
$$v(x):=\begin{cases}\displaystyle-\tau\frac{\nabla\vp(x)}{|\nabla\vp(x)|}\ & 
\text{if }|\nabla\vp(x)|>0,\vspace{3pt}\\
0 & \text{otherwise}.
\end{cases}$$
The function $\vp$ is a positive subsolution of the eigenvalue problem 
$$-\Delta\vp+v\.\nabla\vp=\lambda\vp \quad \text{in }\O,\qquad\vp=0  \quad \text{on }\partial\O.$$
By~\cite{BNV}, this means that $\lambda_{v}\leq\lambda$ and that equality holds if and only if $\vp$ is the principal eigenfunction for the above problem. Since $\|v\|_{L^{\infty}(\O)}\le\tau$, it follows that $\lt\leq\lambda$ and, by uniqueness of the vector field $\ul v$ minimizing~\eqref{lt}, one infers that equality $\lt=\lambda$ holds if and only if $\vp$ coincides with the principal eigenfunction $\pt$ up to a scalar multiple. The proof of Proposition~\ref{pro:char} is thereby complete.
\end{proof}


\SE{Location of maxima}\label{loc}

This section is devoted to the proof of Theorem~\ref{thmaxima}. We first recall from Section~\ref{intro} that $d:\ol\O\to[0,+\infty)$ denotes the distance function from $\partial\O$. Then define the open neighborhood of $\partial\Omega$ of width $\delta>0$, relatively to $\O$, as in~\eqref{Odelta}, that is,
$$\Omega^{\delta}=\big\{x\in\O\;:\; d(x)<\delta\big\}.$$

We will make use of some properties of the distance function in order to construct a family of supersolutions to \eqref{evm}. One of them is the {\em semiconcavity}, which is a straightforward consequence of the regularity of $d$ in a neighborhood of $\partial\O$, as shown for instance in \cite{CanSin}. We include  the proof of this fact below because we use a different notion of semiconcavity, expressed in terms of the {\em second order sub-differential jet}. For a continuous function $u$ at a point $x$, the latter is defined by
\[\begin{split}
J^-u(x):=\Big\{(p,X)\in\R^N\times \mathbb{S}^N\;:\; u(y)\geq & \ u(x)+p\.(y-x)+\frac12 X(y-x)\cdot(y-x)\\
&+o(|y-x|^2)\quad \text{as }y\to x\Big\},
\end{split}\]
where $\mathbb{S}^N$ denotes the space of $N\times N$ symmetric matrices and $Xz\cdot z$ denotes $Xz\cdot z=\sum_{1\le i,j\le N}X_{ij}z_iz_j$ for $X=(X_{ij})_{1\le i,j\le N}\in\mathbb{S}^N$ and $z=(z_i)_{1\le i\le N}\in\R^N$.

\begin{lemma}\label{lem:dist}
There exists $K>0$ such that
$$\forall\, x\in\Omega,\ \forall\,(p,X)\in J^-d(x),\ \big[|p|=1\mbox{ and } X\leq K\,I_N\big],$$
where $I_N$ is the identity matrix and $\leq$ is the usual order on $\mathbb{S}^N$.
\end{lemma}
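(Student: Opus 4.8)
The plan is to bound every $(p,X)\in J^-d(x)$ by comparing $d$, near $x$, with an explicit $C^2$ \emph{upper barrier} that touches $d$ at $x$. The elementary fact underlying the whole argument is this: if $h$ is of class $C^2$ in a neighbourhood of a point $x\in\O$, with $h\ge d$ on $\O$ and $h(x)=d(x)$, then any $(p,X)\in J^-d(x)$ satisfies $p=\nabla h(x)$ and $X\le D^2h(x)$. Indeed, from $h\ge d$, $h(x)=d(x)$ and the definition of $J^-d(x)$ one gets $h(y)-h(x)\ge p\.(y-x)+\tfrac12 X(y-x)\.(y-x)+o(|y-x|^2)$; writing the left-hand side via the second-order Taylor expansion of $h$ at $x$ and then letting $y=x+tw\to x$ with $w$ arbitrary and $t$ of either sign, the terms of order $|y-x|$ force $p=\nabla h(x)$ and the terms of order $|y-x|^2$ force $X\le D^2h(x)$. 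So it suffices to produce, for each $x\in\O$, such a barrier whose $1$-jet has unit gradient and Hessian bounded by a constant independent of $x$.

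A first barrier settles the gradient everywhere and the Hessian away from $\partial\O$. Fix $x\in\O$ and a nearest point $x_0\in\partial\O$, so that $s:=|x-x_0|=d(x)>0$, and set $g(y):=|y-x_0|$. Then $g\ge d$ on $\O$ (as $x_0\in\partial\O$), $g(x)=d(x)$, and $g$ is smooth near $x$ since $s>0$; moreover $\nabla g(x)=(x-x_0)/s$ has norm $1$, and $D^2g(x)=s^{-1}\bigl(I_N-s^{-2}(x-x_0)\otimes(x-x_0)\bigr)\le s^{-1}I_N$. By the previous paragraph this already yields $|p|=1$ for every $x\in\O$ and $(p,X)\in J^-d(x)$, together with the estimate $X\le d(x)^{-1}I_N$, which is uniform on every region $\{x\in\O:\ d(x)\ge\delta_0\}$.

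What remains is a Hessian bound that does not degenerate as $x\to\partial\O$, and this is the only genuinely non-routine step: the $g$-barrier above blows up there. Here one invokes the regularity of $d$ near the boundary. Since $\O\in\mathcal O$ has $C^2$ boundary, it is classical (see e.g.\ \cite{CanSin}) that there is $\delta_0>0$ such that $d\in C^2(\overline{\O^{\delta_0}})$; hence $K_1:=\sup\bigl\{\|D^2d(x)\|:\ x\in\overline\O,\ d(x)\le\delta_0/2\bigr\}<+\infty$. For $x\in\O$ with $d(x)<\delta_0/2$, the function $d$ itself is a $C^2$ barrier at $x$, so $(p,X)\in J^-d(x)$ forces $X\le D^2d(x)\le K_1 I_N$ (and, again, $|p|=|\nabla d(x)|=1$ by the eikonal identity $|\nabla d|\equiv1$ in the tubular neighbourhood). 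Combining the two ranges of $d(x)$ gives the lemma with $K:=\max\{2/\delta_0,\,K_1\}$.

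Finally, I would point out that the appeal to the $C^2$-regularity of $d$ can be replaced by the weaker uniform exterior ball condition, giving a single clean constant: for $x$ close enough to $\partial\O$, with unique nearest point $x_0$ and inward unit normal $\nu(x_0)$, put $c:=x_0-\rho\,\nu(x_0)$, where $\rho>0$ is the exterior-ball radius. Then $h(y):=|y-c|-\rho$ is an upper barrier, because any segment from $y\in\O$ to $c\notin\overline\O$ meets $\partial\O$ at a point at distance $\ge\rho$ from $c$, so $d(y)\le|y-c|-\rho=h(y)$; also $h(x)=d(x)$, $|\nabla h(x)|=1$ and $D^2h(x)\le(\rho+d(x))^{-1}I_N\le\rho^{-1}I_N$. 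Together with the $g$-barrier for $d(x)\ge\rho$, this proves the statement with $K=1/\rho$.
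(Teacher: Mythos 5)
Your main argument is essentially identical to the paper's: both split $\O$ into a tubular neighbourhood of $\partial\O$ (where the $C^2$-regularity of $d$ gives a uniform Hessian bound) and the complement (where the barrier $g(y)=|y-x_0|$, $x_0$ a projection of $x$ onto $\partial\O$, gives $p=\nabla g(x)$ with $|p|=1$ and $X\le d(x)^{-1}I_N$). The only cosmetic difference is the reference for $d\in C^2$ near the boundary (Cannarsa--Sinestrari vs.\ Gilbarg--Trudinger, Lemmas 14.16--14.17), and your phrasing $X\le D^2 g(x)$ is in fact the correct inequality (the paper writes $X=D^2 g(x)$, a harmless slip, since only the upper bound is used).

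Your closing remark is a genuine improvement worth noting: replacing the $C^2$-regularity of $d$ by the uniform exterior ball condition and using the single barrier $h(y)=|y-c|-\rho$, $c=x_0-\rho\,\nu(x_0)$, gives a cleaner and more elementary proof. Since $c\notin\ol\O$, $h$ is smooth throughout $\O$, $h\ge d$ on $\O$ by the crossing-segment argument, $h(x)=d(x)$, $|\nabla h(x)|=1$ and $D^2h(x)\le(\rho+d(x))^{-1}I_N\le\rho^{-1}I_N$; so this barrier alone handles every $x\in\O$ at once, yielding $K=1/\rho$ without any case split and without invoking the regularity theory for $d$ (which is what the paper's argument relies on). This variant would also make the lemma valid under the mere exterior ball condition rather than $C^2$ boundary, a mild but real gain in generality.
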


\begin{proof}
First of all, we know from \cite[Lemmas~14.16 and 4.17]{GT} that there exists $\delta>0$ such that $d$ is of class $C^2(\overline{\Omega^\delta})$ and therefore has bounded Hessian matrix in $\O^\delta$. Consider now $x\in\O\setminus\O^\delta$ and let $z\in\partial\O$ be its or one of its projections on $\partial\O$, that is, $|x-z|=d(x)$. The function $g(y):=|y-z|$ satisfies $g(x)=d(x)$ and, for all $y\in\O$, $g(y)\geq d(y)$. As a consequence, if there exists $(p,X)\in J^-d(x)$ then for $y\in\O$ there holds
\[\begin{split}
g(y)\geq d(y) 
&\geq d(x)+p\.(y-x)+\frac12 X(y-x)\cdot(y-x)+o(|y-x|^2)\\
&=g(x)+p\.(y-x)+\frac12 X(y-x)\cdot(y-x)+o(|y-x|^2),
\end{split}\]
that is, $(p,X)\in J^-g(x)$. Since $g$ is a $C^2$ function outside $z$, we infer that
$$p=\nabla g(x)=\frac{x-z}{|x-z|},\qquad X=D^2g(x)=\frac1{|x-z|}I_N-\frac1{|x-z|^3}(x-z)\otimes(x-z).$$
It follows that $X\leq(d(x))^{-1}I_N\leq\delta^{-1}I_N$. This concludes the proof of the lemma.
\end{proof}

\begin{proof}[Proof of Theorem~\ref{thmaxima}]
Consider an arbitrary $\gamma>1$. For $\tau>0$ and $\e>0$, define the function 
\begin{equation}\label{upperbarrier}
u_{\tau,\e}(x)=1-e^{-\gamma\tau(d(x)+\e)}.
\end{equation}
If $d$ were a $C^2$ function in the whole $\Omega$ we would have
$$-\Delta u_{\tau,\e}-\tau|\nabla u_{\tau,\e}|=\gamma\tau e^{-\gamma\tau(d+\e)}\big(-\Delta d+\gamma\tau|\nabla d|^2-\tau|\nabla d|\big).$$
It is easy to check, using the fact that $u_{\tau,\e}$ is the composition of the function $d$ with a  strictly increasing smooth function, that the above formal computation holds for the sub-differential jets, in the sense of multivalued functions, i.e.,
\[\begin{array}{l}
\big\{-\Tr Y-\tau|q|\ :\ (q,Y)\in J^- u_{\tau,\e}(x)\big\}\vspace{3pt}\\
\qquad\qquad=\big\{  \gamma\tau e^{-\gamma\tau(d(x)+\e)}\big(-\Tr X+\gamma\tau|p|^2-\tau|p|\big): (p,X)\in J^- d(x)\big\}
\end{array}\]
for every $x\in\O$. Owing to Lemma~\ref{lem:dist} and because $\gamma>1$, there is $\tau_0>0$ such that, for every $\tau\ge\tau_0$, $x\in\Omega$ and $(p,X)\in J^- d(x)$, one has $-\Tr X+\gamma\tau|p|^2-\tau|p|\ge1$. It follows that $-\Delta u_{\tau,\e}-\tau|\nabla u_{\tau,\e}|\geq\gamma\tau e^{-\gamma\tau(d+\e)}$ in $\Omega$, still in the viscosity sense, that is,
\Fi{u>}
\forall\tau\geq\tau_0,\,\forall\epsilon>0,\;\forall x\in\Omega,\,\forall(q,Y)\in J^-u_{\tau,\e}(x),\ -\Tr Y\!-\!\tau|q|\!\ge\!\gamma\tau e^{-\gamma\tau(d(x)+\e)}.
\Ff

Next, define
$$k_{\tau,\e}:=\sup_{\O}\frac{\vp_\tau}{u_{\tau,\e}}.$$
Because $u_{\tau,\e}>0$ in $\ol\O$ together with $\vp_\tau>0$ in $\Omega$ and $\vp_\tau=0$ on $\partial\O$, the above supremum is actually a maximum, attained at some $x_{\tau,\e}\in\O$. Namely, the function~$k_{\tau,\e}^{-1}\vp_\tau$ touches $u_{\tau,\e}$ from below at $x_{\tau,\e}$, whence 
$$k_{\tau,\e}^{-1}(\nabla\varphi_\tau,D^2\varphi_\tau)\in J^-u_{\tau,\e}(x_{\tau,\e}).$$
We then deduce from \eqref{u>} that
$$\forall\, \tau\geq\tau_0,\ \forall\,\e>0,\quad -\Delta \vp_\tau(x_{\tau,\e})-\tau|\nabla \vp_\tau(x_{\tau,\e})|\geq k_{\tau,\e}\gamma\tau e^{-\gamma\tau(d(x_{\tau,\e})+\e)},$$
this time in the classical sense. On the other hand, by \eqref{evm},
$$-\Delta \vp_\tau(x_{\tau,\e})-\tau|\nabla \vp_\tau(x_{\tau,\e})|=\lambda(\tau)\vp_\tau(x_{\tau,\e}),$$
and therefore
\Fi{lt>}
\forall\, \tau\geq\tau_0,\ \forall\,\e>0,\quad 
\gamma\tau e^{-\gamma\tau(d(x_{\tau,\e})+\e)}\leq 
\lambda(\tau)\frac{\vp_\tau(x_{\tau,\e})}{k_{\tau,\e}}
=\lambda(\tau)u_{\tau,\e}(x_{\tau,\e})<\lambda(\tau).
\Ff

Now, let us call $R:=\max_{\ol\O}d>0$. By the monotonicity with respect to the inclusion of domains, the quantity $\lambda(\tau)$ is bounded from above by the one corresponding to the case $\Omega=B_R$, and we know from~\cite[Lemma 7.2]{HNR} that the logarithm of the latter behaves like $-\tau R$ as $\tau\rightarrow +\infty$. We can then find $\tau_1>0$ such that $\lambda(\tau)\leq e^{-\tau R/\gamma}$ for~$\tau\geq\tau_1$. We can assume without loss of generality that $\tau_1\geq\max(\tau_0,1)$. It then follows from \eqref{lt>} that
$$\forall\, \tau\geq\tau_1,\ \forall\,\e>0,\quad d(x_{\tau,\e})+\e\geq\frac R{\gamma^2}.$$
Recalling the definition of $k_{\tau,\e}$, we eventually derive, for all $\tau\geq\tau_1$ and $\e>0$,
$$\forall\,x\in\O,\quad \frac{\vp_\tau(x)}{u_{\tau,\e}(x)}\leq k_{\tau,\e}=\frac{\vp_\tau(x_{\tau,\e})}{u_{\tau,\e}(x_{\tau,\e})}\leq \frac1{1-e^{-\gamma\tau(d(x_{\tau,\e})+\e)}}\leq \frac1{1-e^{-\tau R/\gamma}}.$$
Whence, passing to the limit as $\e\to0$ yields
\Fi{pt<}
\forall\,\tau\geq\tau_1,\ \forall\,x\in\O,\quad \vp_\tau(x)\leq \frac{1-e^{-\gamma\tau d(x)}}{1-e^{-\tau R/\gamma}}.
\Ff
It follows that, for $\tau\geq\tau_1$, the function $\pt$ cannot attain its maximal value 1 at any point $x$ with $d(x)<R/\gamma^2$. This concludes the proof by the arbitrariness of $\gamma>1$.
\end{proof}


\SE{Local uniform convergence}\label{convfar}

In this section we derive the locally uniform convergence of the functions $\pt$ to $1$ as $\tau\to+\infty$ (see Lemma~\ref{lem:1} below), which is part of Theorem~\ref{thconv}. To do so, we first show that the minimum of $\pt$ in some concentric balls strongly included in $\O$ can be controlled by some constants close to $1$ as $\tau$ is large.

\begin{lemma}\label{lem:minima}
Let $x_0\in\O$ and $0<R<R'$ be such that $B_{R'}(x_0)\subset\O$ and let $\e\in(0,1/R)$. Then there exists $\tau_0>0$, only depending on $n$, $R$, $R'$ and~$\e$, such that
\Fi{minima}
\forall\,\tau\ge\tau_0,\ \forall\ \frac{2(n-1)}\tau\leq r\leq r'\leq R,\qquad\min_{\partial B_{r'}(x_0)}\vp_\tau\geq \frac{1-\e r'}{1-\e r}\,\min_{\partial B_r(x_0)}\vp_\tau.
\Ff
\end{lemma}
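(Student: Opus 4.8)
The plan is to construct, for each radius $r\in[2(n-1)/\tau,R]$, a radial subsolution of the equation satisfied by $\pt$ on the annulus $B_{R}(x_0)\setminus\overline{B_r(x_0)}$, and to use it as a lower barrier via the comparison principle implicit in Proposition~\ref{pro:char}. The natural candidate, suggested by the one-dimensional/radial model and by the profile $1-e^{-\tau d}$, is a function of the form $w(x)=\psi(|x-x_0|)$ with $\psi$ affine or of the type $\psi(s)=1-\e s$ (or a slight variant $\psi(s)=a-bs$); indeed the target inequality in~\eqref{minima} is exactly the statement that $\min_{\partial B_{r'}}\pt/\min_{\partial B_r}\pt$ is bounded below by the ratio $\psi(r')/\psi(r)$ for $\psi(s)=1-\e s$. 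So I would fix $r$, set $\mu:=\min_{\partial B_r(x_0)}\pt>0$, and compare $\pt$ on the annulus $A:=B_{R}(x_0)\setminus\overline{B_r(x_0)}$ with the function
$$\underline w(x)=\mu\,\frac{1-\e|x-x_0|}{1-\e r},$$
which equals $\mu$ on $\partial B_r(x_0)$ and is positive on $\overline{A}$ since $\e R<1$.

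The heart of the argument is the differential inequality. Writing $\rho=|x-x_0|$ and $\psi(\rho)=\mu(1-\e\rho)/(1-\e r)$, one computes $\Delta\underline w=\psi''(\rho)+\frac{n-1}{\rho}\psi'(\rho)=-\frac{(n-1)\e\mu}{\rho(1-\e r)}<0$ and $|\nabla\underline w|=|\psi'(\rho)|=\frac{\e\mu}{1-\e r}$, so
$$-\Delta\underline w-\tau|\nabla\underline w| \;=\; \frac{\e\mu}{1-\e r}\Bigl(\frac{n-1}{\rho}-\tau\Bigr)\;\le\;0\;\le\;\lambda(\tau)\underline w \quad\text{in }A,$$
where the middle inequality holds precisely because $\rho\ge r\ge 2(n-1)/\tau$ forces $(n-1)/\rho\le\tau/2<\tau$ (the factor $2$ gives room and also handles the case $\rho$ close to $r$). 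Thus $\underline w$ is a genuine subsolution of~\eqref{evm} on the annulus $A$. Now I invoke comparison: $\pt$ satisfies $-\Delta\pt-\tau|\nabla\pt|=\lambda(\tau)\pt$ in $A$, on $\partial B_r(x_0)$ we have $\pt\ge\mu=\underline w$, and on the outer sphere $\partial B_{R}(x_0)\subset\O$ we have $\pt>0$ whereas $\underline w$ is still positive but we need $\pt\ge\underline w$ there — this is the delicate point (see below). Granting it, the maximum principle for $-\Delta-\tau|\nabla\cdot|-\lambda(\tau)$ on $A$ (which holds since $\lambda(\tau)>0$, so the operator has the right sign; alternatively argue with $\pt/\underline w$ as in the proof of Theorem~\ref{thmaxima}, looking at an interior minimum of the quotient) yields $\pt\ge\underline w$ throughout $\overline A$, and in particular on $\partial B_{r'}(x_0)$ for any $r\le r'\le R$, giving $\min_{\partial B_{r'}(x_0)}\pt\ge\psi(r')=\mu\,(1-\e r')/(1-\e r)$, which is exactly~\eqref{minima}.

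The main obstacle is the outer boundary condition: to run the comparison on $A=B_R(x_0)\setminus\overline{B_r(x_0)}$ I need $\pt\ge\underline w$ on $\partial B_R(x_0)$, i.e. a lower bound on $\pt$ on the \emph{larger} sphere, which looks circular. The fix is to use the extra room between $R$ and $R'$: since $B_{R'}(x_0)\Subset\O$, a Harnack-type inequality (applied to $\pt$, a positive solution of the uniformly elliptic equation $-\Delta\pt-\tau|\nabla\pt|=\lambda(\tau)\pt\le e^{-\tau R/\gamma}\pt$ — recall $\lambda(\tau)\to0$) controls $\pt$ on $\overline{B_R(x_0)}$ from below by a fixed constant times $\pt$ somewhere; more straightforwardly, one runs the same barrier on the slightly larger annulus $B_{R'}(x_0)\setminus\overline{B_r(x_0)}$ with $\underline w(x)=\mu(1-\e|x-x_0|)/(1-\e r)$, for which the outer sphere $\partial B_{R'}(x_0)$ still lies in $\O$; choosing $\e<1/R$ guarantees $\underline w>0$ up to radius $R'>R$ only if also $\e<1/R'$, so one should either take $\e<1/R'$ from the start or, better, accept that $\underline w$ may vanish at $\rho=1/\e\in(R,R']$ and just apply the maximum principle on $B_{1/\e}(x_0)\cap B_{R'}(x_0)\setminus\overline{B_r(x_0)}$, where $\underline w=0\le\pt$ on the part of the boundary where $\rho=1/\e$. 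In all cases the outer boundary term becomes harmless and one is left with the clean subsolution computation above; the constant $2(n-1)/\tau$ and the dependence of $\tau_0$ on $n,R,R',\e$ come out of requiring $\lambda(\tau)>0$ together with $(n-1)/\rho\le\tau$, with the threshold $\tau_0$ fixed so that $B_{R'}(x_0)\subset\O$ is compatible with the chosen barrier.
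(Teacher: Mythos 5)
Your barrier idea is essentially the right one, and the subsolution computation in the annulus is correct; you have also correctly put your finger on the real obstacle, namely the value of the barrier on the outer sphere. But none of the fixes you propose actually closes that gap, and one additional idea is needed. The function $\underline w=\mu(1-\e|x-x_0|)/(1-\e r)$ vanishes at $\rho=1/\e$, and you would like this zero to land inside $\O$, i.e.\ at $\rho\le R'$. That requires $\e\ge 1/R'$. The hypothesis, however, is only $\e\in(0,1/R)$, and $\e$ can be arbitrarily small; when $\e<1/R'$ the set $B_{1/\e}(x_0)\cap B_{R'}(x_0)$ is just $B_{R'}(x_0)$, $\underline w$ is still strictly positive on $\partial B_{R'}(x_0)$, and you are back to needing a lower bound for $\pt$ on a large sphere -- the circularity you were trying to avoid. (Note also that you cannot simply shrink $\e$: proving the estimate for $\e'<\e$ gives a \emph{stronger} conclusion, so that direction is fine, but what you would need is $\e'>1/R'\ge 1/R>\e$, i.e.\ to \emph{enlarge} $\e$, which weakens the conclusion.) The Harnack route fails for a different reason: the first-order coefficient of the equation has $L^\infty$ norm $\tau\to+\infty$, so the Harnack constant degenerates; controlling this degeneration is precisely the content of the lemma, so invoking Harnack is circular.

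The missing ingredient, which is what the paper does, is to replace the constant $1$ in your barrier by a smooth radial nonincreasing cutoff $\chi$ with $\chi\equiv 1$ on $B_R(x_0)$ and $\chi\equiv 0$ outside $B_{R'}(x_0)$, and to set $u(x)=\chi(x)-\e|x-x_0|$. Then $u$ automatically vanishes at some $R_\e\in(R,R')$ \emph{for every} $\e>0$, so the outer boundary condition $\pt\ge k\,u=0$ on $\partial B_{R_\e}(x_0)$ is free, while on $B_R(x_0)$ one has $u=1-\e\rho$, giving exactly your intended lower bound. The price is the extra term $-\Delta\chi$ in the subsolution computation, but since $\chi$ is radial and nonincreasing one gets $|\nabla u|\ge\e$, hence $-\Delta u-\tau|\nabla u|\le -\Delta\chi-\e\bigl(\tau-\frac{n-1}{\rho}\bigr)\le -\Delta\chi-\frac{\e\tau}{2}$ on $\{\rho\ge 2(n-1)/\tau\}$, and this is $<0$ once $\tau\ge\tau_0$ with $\tau_0$ large depending on $\sup|\Delta\chi|$ and $\e$ -- which is exactly where the stated dependence of $\tau_0$ on $n,R,R',\e$ comes from (in your version $\tau_0$ only needed $2(n-1)/\tau\le R$, which should have been a warning sign). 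With this barrier, the interior-minimum argument you sketched for the difference $\pt-ku$ (or the quotient, as in Theorem~\ref{thmaxima}) goes through verbatim.
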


\begin{remark} For any $\tau\ge0$, since the function $\pt\in C^2(\O)\cap C(\overline{\O})$ satisfies
$$-\Delta\pt=\tau|\nabla\pt|+\lt\pt>0\ \hbox{ in }\O,$$
the maximum principle implies that
\begin{equation}\label{maxprin}
\min_{\overline{\omega}}\pt=\min_{\partial\omega}\pt
\end{equation}
for any non-empty subset $\omega\subset\Omega$. In particular,~\eqref{minima} can be rewritten as
$$\min_{\overline{B_{r'}(x_0)}}\vp_\tau\geq \frac{1-\e r'}{1-\e r}\,\min_{\overline{B_r(x_0)}}\vp_\tau$$
for all $\tau\ge\tau_0$ and $2(n-1)/\tau\leq r\leq r'\leq R$.
\end{remark}

\begin{proof}[Proof of Lemma~\ref{lem:minima}]
Let us assume for simplicity that $x_0=0$ and let $R$, $R'$ and $\e$ be as in the statement. Consider a smooth function $\chi$ which is radially symmetric, nonincreasing in the radial direction and satisfies
$$\chi(x)=1\ \text{ for }|x|\leq R, \qquad\chi(x)=0\ \text{ for }|x|\geq R'.$$
Then define
$$u(x):=\chi(x)-\e|x|$$
for all $x\in\R^n$. The set where this function is positive is equal to some ball $B_{R_\e}$, with
$$0<R<R_\e<R'.$$
Now, for any $\tau>0$ such that $2(n-1)/\tau<R_\e$, there holds, in the annulus 
$\overline{B_{R_\e}}\setminus B_{2(n-1)/\tau}$,
$$-\Delta u-\tau|\nabla u|\leq -\Delta\chi-\e\left(\tau-\frac{n-1}{|x|}\right)\leq-\Delta\chi-\frac{\e\tau}2.$$
Hence, there exists $\tau_0>0$ large enough, depending on $n$, $R_\e$, $\chi$ and $\epsilon$ (hence, depending on $n$, $R$, $R'$ and $\e$) such that
\begin{equation}\label{tau0}
\frac{2(n-1)}{\tau}<R_\e\ \hbox{ and }\ -\Delta u-\tau|\nabla u|<0\hbox{ in }\overline{B_{R_\e}}\setminus B_{2(n-1)/\tau},\ \hbox{ for all }\tau\ge\tau_0.
\end{equation}

Fix any $\tau\ge\tau_0$, any $r\in [2(n-1)/\tau,R]$ (hence, $\e r\le\e R<1$), and set
$$k:=\frac{1}{1-\e r}\,\min_{\partial B_r}\vp_\tau>0$$
Suppose by contradiction that $\pt-k u<0$ somewhere in the annulus $\ol{B_{R_\e}}\setminus B_{r}$. Since $\pt-k u$ is nonnegative on the boundary of this set (we recall that $u=0$ on $\partial B_{R_\e}$ and $u=1-\e r$ on $\partial B_r$), the negative minimum is reached at some interior point $\hat x\in B_{R_\e}\setminus\overline{B_r}$, and thus there holds
$$\lt\pt(\hat x)=-\Delta\pt(\hat x)-\tau|\nabla \pt(\hat x)|\leq -k\Delta u(\hat x)-k\tau|\nabla u(\hat x)|<0$$
by~\eqref{tau0}. This is impossible because $\lt>0$. As a consequence,
$$\pt-k u\geq 0\ \hbox{ in }\ol{B_{R_\e}}\setminus B_{r},$$
from which~\eqref{minima} is readily obtained recalling that $\chi=1$ in $B_R$.
\end{proof}

By making use of Theorem~\ref{thmaxima} and Lemma~\ref{lem:minima} together with a covering argument, the following local uniform limit holds as $\tau\to+\infty$:

\begin{lemma}\label{lem:1}
The family $(\vp_\tau)_{\tau>0}$ converges locally uniformly to $1$ in $\O$  as $\tau\to+\infty$.
\end{lemma}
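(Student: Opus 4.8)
The plan is to derive the local uniform convergence $\vp_\tau\to1$ by combining the two results already in hand: Theorem~\ref{thmaxima}, which tells us that for large $\tau$ the maximum point $x_\tau$ (where $\vp_\tau(x_\tau)=1$) lies deep in the interior, at distance close to $R=\max_{\ol\O}d$ from $\partial\O$; and Lemma~\ref{lem:minima}, which propagates a lower bound on $\vp_\tau$ from a small sphere to a larger concentric sphere with only a controlled multiplicative loss. The strategy is: first get the convergence $\vp_\tau\to1$ uniformly on a small fixed ball around $x_\tau$ via a blow-up/compactness argument; then chain Lemma~\ref{lem:minima} along a finite sequence of overlapping balls (a ``covering'' or ``chaining'' argument) to carry the near-$1$ lower bound from that ball to any fixed compact subset $K\Subset\O$; and finally combine with the upper bound $\vp_\tau\le1$ (from the normalization, or more precisely from \eqref{pt<}) to conclude.

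In more detail, I would proceed as follows. \emph{Step 1 (near the maximum).} Fix $\e_0>0$ small. By Theorem~\ref{thmaxima}, for $\tau$ large we have $d(x_\tau)>R-\e_0$, so the ball $B_{\rho}(x_\tau)$ is contained in $\O$ for some fixed $\rho>0$ independent of $\tau$. On $\ol{B_\rho(x_\tau)}$ the function $\vp_\tau$ satisfies \eqref{evm} with $0<\lt\to0$ and $0\le\vp_\tau\le1$; by interior elliptic estimates applied to $-\Delta\vp_\tau=\tau|\nabla\vp_\tau|+\lt\vp_\tau$ one controls $\vp_\tau$ in $C^{1}$ on slightly smaller balls (one must be a little careful because of the factor $\tau$ multiplying $|\nabla\vp_\tau|$, but the structure $-\Delta u-\tau|\nabla u|=\lt u\ge0$ makes $u$ a subsolution of $-\Delta u-\tau|\nabla u|=0$, so one can instead compare directly with explicit radial barriers of the form $x\mapsto 1-c\,e^{\tau(|x-x_\tau|-\rho)}$ centered at $x_\tau$, exactly as in the proof of Lemma~\ref{lem:minima}). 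Since $\vp_\tau(x_\tau)=1$ is the maximum, such a barrier argument forces $\vp_\tau\ge1-\eta$ on a fixed smaller ball $B_{\rho/2}(x_\tau)$, where $\eta\to0$ as $\tau\to+\infty$. Alternatively, a rescaling argument: the blow-up $\psi_\tau(y):=\vp_\tau(x_\tau+y/\tau)$ would not directly work because of the boundary being far; the cleanest route is the explicit barrier just described.

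\emph{Step 2 (chaining).} Let $K\Subset\O$ be an arbitrary compact connected set (it suffices to treat connected $K$ since $\O$ is connected, enlarging $K$ if needed to a compact connected set containing $B_{\rho/2}(x_\tau)$ for all large $\tau$, which is possible because $x_\tau$ stays in the fixed compact set $\{d\ge R-\e_0\}$). Cover $K$ by finitely many balls $B_{R_i}(y_i)$ with $B_{R_i'}(y_i)\Subset\O$, arranged so that consecutive balls overlap and the first one contains some point where $\vp_\tau\ge1-\eta$; apply Lemma~\ref{lem:minima} to each ball to see that $\min_{\ol{B_{R_i}(y_i)}}\vp_\tau\ge \frac{1-\e R_i}{1-\e r_i}\,\min_{\ol{B_{r_i}(y_i)}}\vp_\tau$ with $r_i=2(n-1)/\tau\to0$, so the multiplicative factor tends to $1-\e R_i$; choosing $\e>0$ as small as we like (independently of $\tau$) and composing the finitely many factors, we lose only a factor arbitrarily close to $1$ along the whole chain. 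Hence $\min_K\vp_\tau\ge(1-o(1))(1-\eta)$, giving $\liminf_{\tau\to+\infty}\min_K\vp_\tau\ge1$; together with $\vp_\tau\le1$ this is the claimed local uniform convergence.

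The main obstacle is the interplay between the large parameter $\tau$ and the compactness needed in Step~1: one cannot naively pass to a limit in \eqref{evm} because the term $\tau|\nabla\vp_\tau|$ blows up, so the near-the-maximum lower bound must be obtained by a barrier/comparison argument tailored to the sign structure $-\Delta\vp_\tau-\tau|\nabla\vp_\tau|=\lt\vp_\tau>0$ rather than by elliptic compactness; getting a lower bound $1-\eta$ with $\eta$ \emph{uniform} (depending only on $\tau$, not on the unknown location $x_\tau$) is the delicate point, and it is made possible precisely because Theorem~\ref{thmaxima} confines $x_\tau$ to a fixed compact subset of $\O$ where a common barrier radius $\rho$ works. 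The chaining in Step~2 is then routine, the only quantitative care being that the number of balls in the cover of $K$ is fixed (independent of $\tau$) so that the product of the finitely many near-$1$ factors from Lemma~\ref{lem:minima} stays near $1$.
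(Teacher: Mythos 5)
Your Step~2 is essentially the same covering/chaining argument as the paper's, so that part is fine. But Step~1 has a genuine gap, and it lies in exactly the place you flag as ``the delicate point.''

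You propose to get $\vp_\tau\ge 1-\eta$ on a fixed ball around $x_\tau$ by comparing with a barrier $w(x)=1-c\,e^{\tau(|x-x_\tau|-\rho)}$, ``exactly as in the proof of Lemma~\ref{lem:minima}.'' This does not work, for two reasons. First, the sign is wrong: writing $s=|x-x_\tau|$, a direct computation gives
\[
-\Delta w-\tau|\nabla w|=c\tau\,e^{\tau(s-\rho)}\,\frac{n-1}{s}>0,
\]
so $w$ is a strict \emph{super}solution of $-\Delta u-\tau|\nabla u|=0$, as is $\vp_\tau$ itself (you actually write that $\vp_\tau$ is a ``subsolution,'' which is a sign slip: $-\Delta\vp_\tau-\tau|\nabla\vp_\tau|=\lt\vp_\tau>0$ means supersolution). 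Two supersolutions cannot be compared by the maximum principle. Second, and more fundamentally, even with a correctly-signed barrier the comparison requires an ordering on the boundary of the comparison region, and that is precisely what is missing: in Lemma~\ref{lem:minima} the comparison runs in an annulus with $\vp_\tau\ge ku$ known on the inner sphere $\partial B_r(x_0)$ and $u=0$ on the outer sphere, whereas here all you know is the single-point value $\vp_\tau(x_\tau)=1$. There is no a priori lower bound for $\vp_\tau$ on any sphere around $x_\tau$, so no comparison can get started. Passing from ``maximum equals $1$ at one point'' to ``close to $1$ in a neighborhood'' genuinely requires a compactness argument, not a barrier.

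You also dismiss the blow-up $\psi_\tau(y)=\vp_\tau(x_\tau+y/\tau)$ on the grounds that ``the boundary is far,'' but this is backwards: the boundary being far is exactly what makes the blow-up work, and it is the argument the paper uses. The rescaled functions solve $-\Delta\psi_\tau-|\nabla\psi_\tau|=(\lt/\tau^2)\psi_\tau$, whose first-order coefficient (in linear form) is bounded by $1$ uniformly in $\tau$; interior elliptic estimates then give $C^2_{loc}$ compactness, the rescaled domains expand to all of $\R^n$ because $d(x_\tau)$ stays bounded away from $0$, and the limit $\psi_\infty$ satisfies $-\Delta\psi_\infty-|\nabla\psi_\infty|=0$ in $\R^n$ with an interior maximum at $0$, hence $\psi_\infty\equiv1$ by the strong maximum principle. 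This Liouville step is what converts the pointwise information $\vp_\tau(x_\tau)=1$ into the statement $\|\vp_\tau-1\|_{L^\infty(B_{M/\tau}(x_\tau))}\to0$ that feeds Lemma~\ref{lem:minima} in Step~2. Without it (or an equivalent compactness device) your proof does not close.
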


\begin{proof}
Let $(x_\tau)_{\tau>0}$ be a family of maximal points of $(\vp_\tau)_{\tau>0}$, i.e.~such that $\pt(x_\tau)=1$. Remember from Theorem~\ref{thmaxima} that $d(x_\tau)\to\max_{\ol\O}d>0$ as $\tau+\infty$. Firstly, we show that  $\vp_\tau\to1$ near $x_\tau$ as $\tau\to+\infty$. Consider the family of functions~$(\psi_\tau)_{\tau>0}$ defined~by
$$\psi_\tau(x):=\pt\left(x_\tau+\frac{x}\tau\right).$$
Since the points $x_\tau$ are bounded away from $\partial\O$ as $\tau\to+\infty$, the functions $(\psi_\tau)_{\tau>0}$ are defined in a family of domains which converge to the whole space $\R^n$ as $\tau\to+\infty$, and satisfy there
$$-\Delta\psi_\tau-|\nabla\psi_\tau|=\frac{\lt}{\tau^2}\,\psi_\tau.$$
Therefore, since the eigenvalues $\lt$ are bounded as $\tau\to+\infty$ by Proposition~\ref{pro:char} (and even converge to $0$ as $\tau\to+\infty$ as recalled in Section~\ref{intro}), interior elliptic estimates~\footnote{One first writes the equation in linear form, with first order coefficient equal to $-\nabla\psi_\tau/|\nabla\psi_\tau|$, which has $L^\infty$ norm equal to 1, and derives $W^{2,p}_{loc}$ a priori estimates for any $p\in(1,+\infty)$; then, by Morrey's inequality, the terms $|\nabla\psi_\tau|$ can be considered as bounded data in $C^{0,\alpha}_{loc}$, for any $\alpha\in(0,1)$, leading to $C^{2,\alpha}_{loc}$ estimates.} imply that, as $\tau\to+\infty$, the functions $\psi_\tau$ converge in $C^2_{loc}(\R^n)$, at least for a sequence $(\tau_n)_{n\in\N}\to+\infty$,  to some function $\psi_\infty$ satisfying
$$\psi_\infty(0)=1=\max_{\R^n}\psi_\infty,\qquad -\Delta\psi_\infty-|\nabla\psi_\infty|=0\hbox{ in }\R^n.$$
Since the latter equation can be written in linear form, with a bounded first order coefficient and without zero order term, it follows from the strong maximum principle that $\psi_\infty\equiv1$ in $\R^n$. By uniqueness of the limit, this means that the whole family $(\psi_\tau)_{\tau>0}$ converges locally uniformly to $1$ as $\tau\to+\infty$. Namely, 
\Fi{loc1}
\forall\, M\ge0,\qquad \|\pt-1\|_{L^\infty(B_{M/\tau}(x_\tau))}\to0\quad\text{as }\tau\to+\infty.
\Ff

Next, we extend the above convergence to compact subsets of $\O$ using Lemma~\ref{lem:minima} together with a covering argument. Consider now
$$\O\setminus\O^\delta=\big\{x\in\O\ :\ d(x)\ge\delta\big\}.$$
Because of the regularity of $\O$, this compact set is non-empty, smooth and connected for $\delta>0$ sufficiently small. Fix in the sequel any value of $\delta\in(0,\max_{\ol\O}d)$ for which this holds and take any $\e>0$ such that
$$0<\e<\frac{2}{\delta}.$$

We now apply Lemma~\ref{lem:minima} with this value of $\e$, any point $x_0=y\in\O\setminus\O^\delta$, $R=\delta/2$ and $R'=\delta$. It implies that, for $\tau$ larger
than some $\tau_1$, only depending on $n$, $\delta$ and $\e$, there holds:
$$\forall\,y\in \O\setminus\O^\delta,\ \forall\,\frac{2(n-1)}{\tau}\leq r\leq r'\leq\frac\delta2,\qquad\min_{\partial B_{r'}(y)}\vp_\tau\geq\Big(1-\frac{\e\delta}{2}\Big)\,\min_{\partial B_r(y)}\vp_\tau,$$
from which, taking $r=2(n-1)/\tau$, $r'=\delta/2$ and using~\eqref{maxprin}, we deduce
\Fi{annulus}
\forall\,y\in \O\setminus\O^\delta,\qquad\min_{\ol{B_{\delta/2}(y)}}\vp_\tau\geq\Big(1-\frac{\e\delta}{2}\Big)\,\min_{\ol{B_{2(n-1)/\tau}(y)}}\vp_\tau.
\Ff
On the other hand, using~\eqref{loc1} with $M=2(n-1)$ we get, for $\tau>0$ sufficiently large, 
$$\min_{\ol{B_{2(n-1)/\tau}(x_\tau)}}\vp_\tau\geq 1-\frac{\e\delta}{2},$$
whence, for possibly larger $\tau\geq\tau_1$ so that $x_\tau\in\O\setminus\O^\delta$ (remember that $\lim_{\tau\to+\infty}d(x_\tau)=\max_{\ol\O}d>\delta$),
we can gather the above inequality together with~\eqref{annulus} and derive
\Fi{step0}
\min_{\ol{B_{\delta/2}(x_\tau)}}\vp_\tau\geq\Big(1-\frac{\e\delta}{2}\Big)^2.
\Ff

Consider then a covering of $\O\setminus {\O^\delta}$ of the type
$$\O\setminus {\O^\delta}\ \subset\ B_{\delta/6}(y_1)\cup\cdots\cup B_{\delta/6}(y_m)$$
with $y_1,\dots,y_m\in\O\setminus {\O^\delta}$. Let $\tau\geq\tau_1$ 
be large enough so that $x_\tau\in\O\setminus{\O^\delta}$, property~\eqref{step0} holds and, in addition, $2(n-1)/\tau\le\delta/6$. Since $\tau\geq\tau_1$, we can apply~\eqref{annulus} and deduce that
\Fi{half}
\forall\,j=1,\dots,m,\qquad\min_{\ol{B_{\delta/2}(y_j)}}\vp_\tau\geq\Big(1-\frac{\e\delta}{2}\Big)\,\min_{\ol{B_{\delta/6}(y_j)}}\vp_\tau.
\Ff
Moreover, because $x_\tau\in\O\setminus{\O^\delta}$, there exists $j_1\in\{1,\dots,m\}$ such that $x_\tau\in B_{\delta/6}(y_{j_1})$ and, in particular, $B_{\delta/6}(y_{j_1})\subset B_{\delta/2}(x_\tau)$. Combining~\eqref{step0} and~\eqref{half} we then get
$$\min_{\ol{B_{\delta/2}(y_{j_1})}}\vp_\tau\geq\Big(1-\frac{\e\delta}{2}\Big)^3.$$
Now, because $\O\setminus{\O^\delta}$ is connected, there exists $j_2\neq j_1$ such that
$$B_{\delta/6}(y_{j_2})\cap B_{\delta/6}(y_{j_1})\neq\emptyset$$
which entails $B_{\delta/6}(y_{j_2})\subset B_{\delta/2}(y_{j_1})$. Thus, as before, we find
$$\min_{\ol{B_{\delta/2}(y_{j_2})}}\vp_\tau\geq\Big(1-\frac{\e\delta}{2}\Big)^4.$$
By a recursive argument we then find a permutation $\{j_1,\dots,j_m\}$ of $\{1,\dots,m\}$ satis\-fying the following property:
$$\forall\,k=2,\dots,m,\quad B_{\delta/6}(y_{j_k})\cap\left( \bigcup_{l=1}^{k-1} B_{\delta/6}(y_{j_l})\right)\neq\emptyset\ \ \hbox{ and }\ \ \min_{\ol{B_{\delta/2}(y_{j_k})}}\vp_\tau\geq\Big(1-\frac{\e\delta}{2}\Big)^{2+k}.$$
It follows that
$$\min_{\O\setminus {\O^\delta}}\vp_\tau\geq\Big(1-\frac{\e\delta}{2}\Big)^{2+m}.$$

This concludes the proof because $\delta>0$ can be chosen arbitrarily small, providing an integer $m$, and next one can take $\e>0$ as small as wanted.
\end{proof}


\SE{Asymptotic profile near the boundary}\label{asymprof}

This section is devoted to the proof of Theorem~\ref{thconv}, namely the limit~\eqref{profile}. Since Lemma~\ref{lem:1} provides the limit in any compact subset of $\O$, we only have to show the asymptotic profile~\eqref{profile} in a neighborhood of $\partial\O$. To do so, we will make use of the upper bound~\eqref{pt<} derived before and of a lower barrier function of the same type as the upper barrier function~\eqref{upperbarrier} used in the proof of Theorem~\ref{thmaxima}.

\begin{proof}[Proof of Theorem~\ref{thconv}]
The upper bound follows from~\eqref{pt<}. Indeed, for any given $\gamma>1$, 
we have that~\eqref{pt<} holds for $\tau_1>0$ sufficiently large (depending on $\O$ and $\gamma$). It follows that
$$\limsup_{\tau\to+\infty}\left(\sup_{x\in\O}\frac{\pt(x)}{1-e^{-\gamma\tau d(x)}}\right)\leq1.$$
Now, since the function $\chi$ defined by 
\begin{equation}\label{defchi}
\chi(s)=\frac{1-s^\gamma}{1-s}
\end{equation}
is increasing on $(0,1)$ and tends to $\gamma$ as $s\to1^-$, we deduce that 
$$\limsup_{\tau\to+\infty}\left(\sup_{x\in\O}\frac{\pt(x)}{1\!-\!e^{-\tau d(x)}}\right)\le\limsup_{\tau\to+\infty}\left[\left(\sup_{x\in\O}\frac{\pt(x)}{1\!-\!e^{-\gamma\tau d(x)}}\right)\!\times\!\left(\sup_{x\in\O}\frac{1\!-\!e^{-\gamma\tau d(x)}}{1\!-\!e^{-\tau d(x)}}\right)\right]\leq\gamma,$$
which provides us with the desired upper bound, due to the arbitrariness of $\gamma>1$.

It remains to derive the lower bound. To do this, from \cite[Lemmas~14.16 and 14.17]{GT}, we fix $\delta>0$ such that $d$ is of class $C^2$ in $\overline{\O^\delta}$, with $\O^\delta$ defined by~\eqref{Odelta}, and we consider any $0<\eta<1$. Then, we define the following family of functions, which will play the role of lower barriers, for $\tau>0$ and $\e>0$:
$$w_{\tau,\e}(x)=1-e^{-\eta\tau(d(x)-\e)}.$$
These functions satisfy
$$-\Delta w_{\tau,\e}-\tau|\nabla w_{\tau,\e}|=\eta\tau e^{-\eta\tau(d(x)-\e)}\left(-\Delta d+\tau\left(\eta-1\right)\right)\quad\text{in }\ol{\O^\delta}.$$
Therefore, there exists $\tau_0>0$, only depending on $\O$, $\eta$ and $\delta$, such that
\Fi{w<}
\forall\,\tau\ge\tau_0,\ \forall\,\e>0,\ \ -\Delta w_{\tau,\e}-\tau|\nabla w_{\tau,\e}|<0\quad\text{in }\ol{\O^\delta}.
\Ff

Let us call
$$k_{\tau,\e}:=\sup_{\O^\delta}\frac{w_{\tau,\e}}{\vp_\tau}.$$
For any $\tau>0$ and $0<\e<\delta$, the function $w_{\tau,\e}$ is nonpositive in $\ol{\O^\e}$ and positive in~$\ol{\O^\delta}\setminus\ol{\O^\e}$, thus the above supremum $k_{\tau,\e}$ is positive and is actually a maximum, attained at some $x_{\tau,\e}$ such that $\e<d(x_{\tau,\e})\le\delta$. We claim that
$d(x_{\tau,\e})=\delta$ for any $\tau\ge\tau_0$ and $\e\in(0,\delta)$. Indeed, otherwise $\e<d(x_{\tau,\e})<\delta$ and the point $x_{\tau,\e}$ would then be a local maximum of the function $w_{\tau,\e}-k_{\tau,\e}\pt$ which is interior to $\O^\delta\setminus\ol{\O^\e}$. Hence,
$$-\Delta w_{\tau,\e}(x_{\tau,\e})-\tau|\nabla w_{\tau,\e}(x_{\tau,\e})|\ge k_{\tau,\e}\big(-\Delta\pt(x_{\tau,\e})-\tau|\nabla\pt(x_{\tau,\e})|\big),$$
whence $0>k_{\tau,\e}\lambda(\tau)\pt(x_{\tau,\e})$ by~\eqref{w<} and~\eqref{evm}. This contradicts the positivity of $k_{\tau,\e}$, $\lambda(\tau)$ and $\pt(x_{\tau,\e})$.

Therefore, for all $\tau\geq\tau_0$ and $\e\in(0,\delta)$, we have that $d(x_{\tau,\e})=\delta$ and
$$\forall\,x\in\ol{\O^\delta},\quad\frac{w_{\tau,\e}(x)}{\vp_\tau(x)}\leq\frac{w_{\tau,\e}(x_{\tau,\e})}{\vp_\tau(x_{\tau,\e})}\leq\frac{1-e^{-\eta\tau\delta}}{\min_{d(y)=\delta}\vp_\tau(y)},$$
from which, letting $\e\to0$ we obtain
$$\forall\,x\in\ol{\O^\delta},\quad\frac{1-e^{-\eta\tau d(x)}}{\vp_\tau(x)}\leq\frac{1-e^{-\eta\tau\delta}}{\min_{d(y)=\delta}\vp_\tau(y)}.$$
By Lemma~\ref{lem:1}, the right-hand side converges to $1$ as $\tau\to+\infty$, and thus
$$\liminf_{\tau\to+\infty}\left(\inf_{x\in\O^\delta}\frac{\vp_\tau(x)}{1-e^{-\eta\tau d(x)}}\right)\geq1.$$
Finally, using again the monotonicity of the function $\chi$ defined in~\eqref{defchi} above, we easily deduce that
$$\liminf_{\tau\to+\infty}\left(\inf_{x\in\O^\delta}\frac{\vp_\tau(x)}{1-e^{-\tau d(x)}}\right)\geq\eta.$$

Because $\eta<1$ is arbitrary, we can replace it by $1$ in the above inequality. This provides the desired lower bound in $\O^\delta$. Observe that in the set $\O\setminus\O^\delta$, the uniform lower bound (as well as the upper bound) follows immediately from Lemma~\ref{lem:1}. The proof of Theorem~\ref{thconv} is thereby complete.
\end{proof}


\SE{Asymptotic behavior of $\nabla\pt$}\label{asympgrad}

In this section we prove Theorem~\ref{thnabla}. First of all, for any $y\in\partial\Omega$, we 
let $\nu(y)$ denote the outward unit normal to $\Omega$ at $y$. The proof of Theorem~\ref{thnabla} is based on the following lemma.

\begin{lemma}\label{convprofile}
For any sequence $(y_k)_{k\in\N}$ of $\partial\O$ converging to $y\in\partial\O$ and for any sequence $(\tau_k)_{k\in\N}$ of positive real numbers converging to $+\infty$, there holds
$$\varphi_{\tau_k}\Big(y_k+\frac{x}{\tau_k}\Big)\mathop{\longrightarrow}_{k\to+\infty}1-e^{x\cdot\nu(y)}\ \hbox{ for all }x\in\R^n\hbox{ such that }x\cdot\nu(y)<0.$$
\end{lemma}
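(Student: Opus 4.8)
The plan is to prove Lemma~\ref{convprofile} by a blow-up argument around the boundary points $y_k$, analogous in spirit to the rescaling used in Lemma~\ref{lem:1} but now centered on $\partial\O$ rather than on the interior maximum. First I would straighten the boundary: since $\O\in\mathcal O$ has $C^2$ boundary, after a rotation and translation bringing $y_k$ to the origin with $\nu(y_k)=e_n$, the set $\tau_k(\O-y_k)$ converges locally (in the Hausdorff sense, with $C^2$ control) to the half-space $H:=\{x\in\R^n:x\cdot\nu(y)<0\}$ as $k\to+\infty$, using $y_k\to y$ and $\nu(y_k)\to\nu(y)$. Set $\psi_k(x):=\vp_{\tau_k}(y_k+x/\tau_k)$, defined on $\Omega_k:=\tau_k(\O-y_k)$; it satisfies $\psi_k=0$ on $\partial\Omega_k$, $0<\psi_k\le1$, and
$$-\Delta\psi_k-|\nabla\psi_k|=\frac{\lambda(\tau_k)}{\tau_k^2}\,\psi_k\quad\text{in }\Omega_k.$$
Since $\lambda(\tau_k)$ is bounded (Proposition~\ref{pro:char}), the right-hand side tends to $0$ uniformly. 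Writing the equation in linear form with first-order coefficient $-\nabla\psi_k/|\nabla\psi_k|$ of unit $L^\infty$ norm, interior $W^{2,p}_{loc}$ estimates together with boundary estimates up to the flattened $\partial\Omega_k$ (the boundary is uniformly $C^2$ after rescaling, since the original boundary is $C^2$) give $C^{1,\alpha}_{loc}(\overline{H})$ and $C^{2,\alpha}_{loc}(H)$ bounds. Hence, along a subsequence, $\psi_k\to\psi_\infty$ in $C^2_{loc}(H)\cap C^1_{loc}(\overline H)$, where $\psi_\infty$ solves $-\Delta\psi_\infty-|\nabla\psi_\infty|=0$ in $H$, $0\le\psi_\infty\le1$, $\psi_\infty=0$ on $\partial H$.

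The next step is to identify $\psi_\infty$. I expect $\psi_\infty(x)=1-e^{x\cdot\nu(y)}$, which indeed solves the limit equation since with $t:=x\cdot\nu(y)<0$ one has $-\Delta(1-e^t)-|\nabla(1-e^t)|=-(-e^t)-e^t=0$. To pin it down I would combine two ingredients: the upper bound~\eqref{pt<} and the lower bound of Theorem~\ref{thconv}, rescaled. From~\eqref{pt<}, for any $\gamma>1$ and $\tau$ large, $\vp_\tau(x)\le(1-e^{-\gamma\tau d(x)})/(1-e^{-\tau R/\gamma})$; evaluating at $y_k+x/\tau_k$ and using $d(y_k+x/\tau_k)=-x\cdot\nu(y)/\tau_k+o(1/\tau_k)$ (valid for $x$ in the interior of $H$, where the nearest boundary point is near $y_k$ and $d$ is differentiable with $\nabla d=-\nu$), the right-hand side converges to $1-e^{\gamma x\cdot\nu(y)}$; letting $\gamma\to1^+$ gives $\psi_\infty(x)\le1-e^{x\cdot\nu(y)}$. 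For the reverse inequality I would rescale the lower barrier from the proof of Theorem~\ref{thconv}: for $0<\eta<1$, $\vp_\tau(x)\ge$ (something asymptotically $\ge \eta$-multiple of) $1-e^{-\eta\tau d(x)}$ on a fixed boundary neighborhood $\O^\delta$, and more precisely Theorem~\ref{thconv} itself gives $\vp_\tau(x)/(1-e^{-\tau d(x)})\to1$ uniformly; plugging $x=y_k+x/\tau_k$ and using the expansion of $d$ yields $\psi_\infty(x)\ge 1-e^{x\cdot\nu(y)}$. Together these force $\psi_\infty(x)=1-e^{x\cdot\nu(y)}$, and since the limit is independent of the subsequence, the full sequence converges, pointwise for every $x$ with $x\cdot\nu(y)<0$ (and in fact locally uniformly there).

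A cleaner alternative for the identification, avoiding the rescaled barriers, is a direct PDE uniqueness argument: any $\psi_\infty\in C^2(H)\cap C(\overline H)$ with $0\le\psi_\infty\le1$, $-\Delta\psi_\infty-|\nabla\psi_\infty|=0$ in $H$, $\psi_\infty=0$ on $\partial H$ must equal $1-e^{x\cdot\nu(y)}$; this follows by comparing with the barriers $1-e^{\gamma x\cdot\nu(y)}$ ($\gamma>1$, supersolutions by the computation in~\eqref{u>}-type estimates, here exact since $H$ is flat) from above and $1-e^{\eta x\cdot\nu(y)}$ ($0<\eta<1$, subsolutions) from below, sliding in the normal direction and using the sign of the zero-order-free equation plus the maximum principle, exactly as $\gamma,\eta\to1$. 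I would still need~\eqref{pt<} (or Theorem~\ref{thmaxima}) to guarantee $\psi_k\le1$ and to rule out the degenerate limit $\psi_\infty\equiv0$: the point is that $\vp_\tau\to1$ locally uniformly in $\O$ by Lemma~\ref{lem:1}, which after rescaling forces $\psi_\infty(x)\to1$ as $x\cdot\nu(y)\to-\infty$ along the normal, so $\psi_\infty\not\equiv0$; combined with the comparison barriers this again yields $\psi_\infty=1-e^{x\cdot\nu(y)}$.

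The main obstacle I anticipate is the boundary regularity needed to pass to the limit \emph{up to} $\partial\Omega_k$: one must check that the rescaled domains $\Omega_k$ have uniformly $C^2$ (or at least uniformly Lipschitz with interior/exterior cone and, better, $C^{1,\alpha}$) boundary near the origin, so that global (up-to-the-boundary) $W^{2,p}$ or $C^{1,\alpha}$ estimates apply with constants independent of $k$, and that $\psi_k$ remains bounded away from $\partial\Omega_k$ in the right sense to extract a subsequential limit that is continuous (not merely $L^p$) up to $\partial H$ with the correct zero boundary value; since $\partial\O$ is $C^2$, its rescaling by $\tau_k$ flattens and the $C^2$ norm of the local graph tends to $0$, so this is true but requires care. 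A secondary technical point is justifying the expansion $d(y_k+x/\tau_k)=-x\cdot\nu(y)/\tau_k+o(1/\tau_k)$ locally uniformly for $x$ in compact subsets of the open half-space — again a consequence of the $C^2$ regularity of $\partial\O$ and the fact that $d$ is $C^2$ (hence $\nabla d=-\nu$) in $\overline{\O^\delta}$.
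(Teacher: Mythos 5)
Your proposal is correct in substance, but it wraps the paper's argument inside an unnecessary layer of compactness machinery. The core of the paper's proof is precisely what appears inside your ``first alternative'' for identifying $\psi_\infty$: since Theorem~\ref{thconv} gives $\vp_\tau(x)/(1-e^{-\tau d(x)})\to1$ \emph{uniformly} on $\O$, the lemma reduces to the single claim
\[
\tau_k\,d\Big(y_k+\frac{x}{\tau_k}\Big)\ \longrightarrow\ -\,x\cdot\nu(y)\qquad(k\to+\infty),
\]
for each fixed $x$ with $x\cdot\nu(y)<0$; once this holds, substitution finishes the proof with no subsequence extraction, no $W^{2,p}$ or $C^{2,\alpha}$ estimates up to the rescaled boundary, and no Liouville-type uniqueness on the half-space. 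The entire blow-up paragraph (defining $\Omega_k$, proving uniform elliptic estimates, extracting $\psi_\infty$, identifying it) can be deleted; it is not wrong, but it obscures the fact that the uniform estimate of Theorem~\ref{thconv} already does all the analytic work. Your second, more self-contained route via half-space comparison and sliding is a genuinely different argument --- it could in principle avoid invoking Theorem~\ref{thconv} and is in the spirit of classical Liouville theorems --- but as written it is only a sketch: the sliding step, the non-degeneracy argument ruling out $\psi_\infty\equiv0$, and the passage from a subsequential limit to the full limit would all need to be filled in, and you would still need the boundary regularity discussion you flag as the ``main obstacle.'' Relying on Theorem~\ref{thconv} is both shorter and cleaner.

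On the one technical point you defer: you are right that $\tau_k\,d(y_k+x/\tau_k)\to -x\cdot\nu(y)$ is the crux, and that it follows from the $C^2$ regularity of $d$ near $\partial\O$. A clean way to prove it is a Taylor expansion of $d$ at $y_k$ along the segment to $y_k+x/\tau_k$ (which lies in $\overline{\O^\delta}$ for $k$ large, since $x\cdot\nu(y_k)<0$): using $d(y_k)=0$, $\nabla d(y_k)=-\nu(y_k)\to-\nu(y)$, and the uniform bound on $D^2d$ on $\overline{\O^\delta}$, one gets $\tau_k d(y_k+x/\tau_k)=-\nu(y_k)\cdot x+O(1/\tau_k)$. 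The paper instead proves this by introducing the projection $\xi_k$ of $y_k+x/\tau_k$ onto $\partial\O$ and computing in a local graph parametrization; your Taylor route is arguably more transparent, but either way this step should be written out, not asserted, since it is where the $C^2$ hypothesis on $\partial\O$ is actually consumed.
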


\begin{proof}
The proof strongly relies on Theorem~\ref{thconv}. First of all, up to rotation and translation of the frame, one can assume without loss of generality that
$$y=0=(0,\cdots,0)\ \hbox{ and }\ \nu(y)=(0,\cdots,0,-1).$$
Fix then any $x=(x_1,\cdots,x_n)\in\R^n$ such that $x\cdot\nu(y)<0$, that is, $x_n>0$. Since $\nu(y_k)\to\nu(y)$ as $k\to+\infty$, it follows that $y_k+x/\tau_k$ belongs to $\Omega$ for all $k$ large enough (in particular, $d(y_k+x/\tau_k)>0$ for $k$ large enough), and, since $y_k\in\partial\O$,
\begin{equation}\label{tauk}
d\Big(y_k+\frac{x}{\tau_k}\Big)\le\frac{|x|}{\tau_k}\to0\ \hbox{ as }k\to+\infty.
\end{equation}
From formula~\eqref{profile} in Theorem~\ref{thconv}, it is therefore sufficient to show that
\begin{equation}\label{claim}
\tau_k\,d\Big(y_k+\frac{x}{\tau_k}\Big)\to x_n\ \hbox{ as }k\to+\infty
\end{equation}
in order to conclude the proof of Lemma~\ref{convprofile}.

Since $\nu(y)=(0,\cdots,0,-1)$ and $\partial\O$ is of class $C^2$, there exist $r>0$ and a $C^2$ function $g$ defined in a neighborhood $V$ of $(0,\cdots,0)\in\R^{n-1}$ such that 
$$\partial\O\cap B_r=\big\{(x',x_n)\ :\ x'\in V,\
x_n=g(x')\}$$
and
$$\frac{\partial g}{\partial x_i}(0)=0\ \hbox{ for all }1\le i\le n-1,$$
where, for all points $x=(x_1,\cdots,x_n)\in\R^n$, we write $x'=(x_1,\cdots,x_{n-1})$. Since $y_k\in\partial\O$ for every $k\in\N$ and $\lim_{k\to+\infty}d(y_k+x/\tau_k)=0$, one can assume without loss of generality that, for every $k\in\N$, there is a unique point $\xi_k\in\partial\O$ such that
$$d\Big(y_k+\frac{x}{\tau_k}\Big)=\Big|y_k+\frac{x}{\tau_k}-\xi_k\Big|.$$
Notice that, by~\eqref{tauk},
$$|\xi_k|\le\Big|\xi_k-y_k-\frac{x}{\tau_k}\Big|+\Big|y_k+\frac{x}{\tau_k}\Big|\le d\Big(y_k+\frac{x}{\tau_k}\Big)+|y_k|+\frac{|x|}{\tau_k}\le|y_k|+\frac{2|x|}{\tau_k}\to0$$
as $k\to+\infty$. For each $k\in\N$ large enough, the non-zero vector $y_k+x/\tau_k-\xi_k$ is parallel to the normal $\nu(\xi_k)$. Since for $k\in\N$ sufficiently large, the tangent space of~$\O$ at the point $\xi_k$ is generated by the vectors
$$T_i(\xi_k):=\Big(\underbrace{0,\cdots,0}_{i-1},1,\underbrace{0,\cdots,0}_{n-1-i},\frac{\partial g}{\partial x_i}(\xi'_k)\Big),\quad
i=1,\dots,n-1,$$
we find that $(y_k+x/\tau_k-\xi_k)\cdot T_i(\xi_k)=0$ for all $1\le i\le n-1$, that is,
$$y_{k,i}+\frac{x_i}{\tau_k}-\xi_{k,i}=-\frac{\partial g}{\partial x_i}(\xi'_k)\times\Big(y_{k,n}+\frac{x_n}{\tau_k}-\xi_{k,n}\Big).$$
Therefore,
\begin{equation}\label{taukd}
\tau_k\,d\Big(y_k+\frac{x}{\tau_k}\Big)=\tau_k\Big|y_k+\frac{x}{\tau_k}-\xi_k\Big|=\tau_k\sqrt{1+\sum_{1\le i\le n-1}\Big(\frac{\partial g}{\partial x_i}(\xi'_k)\Big)^2}\times\Big|y_{k,n}+\frac{x_n}{\tau_k}-\xi_{k,n}\Big|
\end{equation}
for all $k\in\N$ large enough. Now, since
$$\big|y_k-\xi_k|\le\Big|y_k+\frac{x}{\tau_k}-\xi_k\Big|+\frac{|x|}{\tau_k}=d\Big(y_k+\frac{x}{\tau_k}\Big)+\frac{|x|}{\tau_k}\le\frac{2|x|}{\tau_k}$$
by~\eqref{tauk}, it follows in particular that
$$|y'_k-\xi'_k|=O\Big(\frac{1}{\tau_k}\Big)\ \hbox{ as }k\to+\infty.$$
On the other hand, for $k\in\N$ large enough such that $y_k$, $\xi_k\in\partial\Omega\cap B_r$ and the segment $[y'_k,\xi'_k]$ is included in $V$, there holds
$$|y_{k,n}-\xi_{k,n}|=|g(y'_k)-g(\xi'_k)|\le\Big(\max_{[y'_k,\xi'_k]}|\nabla g|\Big)\times|y'_k-\xi'_k|.$$
Since $\lim_{k\to+\infty}y'_k=\lim_{k\to+\infty}\xi'_k=0$ and $g$ is (at least) of class $C^1$ in $V$ with $\nabla g(0)=0$, one infers that $|y_{k,n}-\xi_{k,n}|=o(1/\tau_k)$ as $k\to+\infty$. Finally, using~\eqref{taukd}, $\nabla g(0)=0$ and $x_n>0$, one concludes that
$$\tau_k\,d\Big(y_k+\frac{x}{\tau_k}\Big)=\sqrt{1+\sum_{1\le i\le n-1}\Big(\frac{\partial g}{\partial x_i}(\xi'_k)\Big)^2}\times\big|\tau_k(y_{k,n}-\xi_{k,n})+x_n\big|\to x_n\ \hbox{ as }k\to+\infty.$$
This is the desired result~\eqref{claim} and the proof of Lemma~\ref{convprofile} is thereby complete.
\end{proof}

\begin{proof}[Proof of Theorem~\ref{thnabla}] Fix any real number $M>0$. Let us first show that $\min_{\ol{\Omega^{M/\tau}}}|\nabla\pt|\gtrsim\tau$ as $\tau\to+\infty$. Assume not. Then there exist some sequences~$(\tau_k)_{k\in\N}$ of positive real numbers and $(x_k)_{k\in\N}$ in $\ol\O$ such that
\begin{equation}\label{xktauk}
x_k\in{\ol{\Omega^{M/\tau_k}}}\hbox{ for all }k\in\N,\ \ \tau_k\to+\infty\ \hbox{ and }\frac{|\nabla\varphi_{\tau_k}(x_k)|}{\tau_k}\to0\ \hbox{ as }k\to+\infty.
\end{equation}
For every $k\in\N$ large enough, there is a unique
\begin{equation}\label{defyk}
y_k\in\partial\O\ \hbox{ such that }d(x_k)=|x_k-y_k|.
\end{equation}
Consider now, for all $k\in\N$ large enough, the functions
\begin{equation}\label{defpsik}
\psi_k(x)=\varphi_{\tau_k}\Big(y_k+\frac{x}{\tau_k}\Big),
\end{equation}
which are defined in $\ol{\O_k}$ with
\begin{equation}\label{defomegak}
\O_k=\tau_k(\O-y_k).
\end{equation}
Up to extraction of a subsequence, one can assume that
\begin{equation}\label{defy}
y_k\to y\in\partial\Omega\ \hbox{ as }k\to+\infty.
\end{equation}
Denote $H$ the half-space
\begin{equation}\label{defH}
H=\big\{x\in\R^n:\ x\cdot\nu(y)<0\big\}.
\end{equation}
Since $\tau_k\to+\infty$ and $\nu(y_k)\to\nu(y)$ as $k\to+\infty$, it follows that, for any compact set $K\subset H$ (resp. $K\subset\R^n\setminus\overline{H}$), one has $K\subset\Omega_k$ (resp. $K\cap\overline{\O_k}=\emptyset$) for all $k$ large enough. Furthermore, the functions $\psi_k:\overline{\Omega_k}\to[0,1]$ satisfy
\begin{equation}\label{eqpsik}\left\{\begin{array}{ll}
\displaystyle-\Delta\psi_k-|\nabla\psi_k|=\frac{\lambda(\tau_k)}{\tau_k^2}\,\psi_k & \hbox{in }\Omega_k,\vspace{3pt}\\
\psi_k=0 & \hbox{on }\partial\O_k,\end{array}\right.
\end{equation}
with $\lambda(\tau_k)/\tau_k^2\to0$ as $k\to+\infty$. Since the families $(\partial\Omega_k\cap B_R)_{k\in\N}$ are bounded in~$C^2$ for every $R>0$, standard elliptic estimates up to the boundary and Sobolev injections imply that the sequences $(\|\psi_k\|_{W^{2,p}(\Omega_k\cap B_R)})_{k\in\N}$ and $(\|\psi_k\|_{C^{1,\alpha}(\overline{\Omega_k}\cap\overline{B_R})})_{k\in\N}$ are bounded for every $1\le p<+\infty$ and $0\le\alpha<1$. From the equations~\eqref{eqpsik} satisfied by the functions $\psi_k$, one also infers that, for any compact set $K\subset H$, the functions $\psi_k$ are also bounded in $C^{2,\alpha}(K)$ for $k$ large enough. Therefore, there is a function $\psi\in C^2(H)$ such that, up to extraction of a subsequence,
\begin{equation}\label{defpsi}
\psi_k\to\psi\ \hbox{ in }C^2_{loc}(H)\ \hbox{ as }k\to+\infty.
\end{equation}
From the previous observations, the function $\psi$ can also be extended as a $C^{1,\alpha}_{loc}(\overline{H})$ function (for all $0\le\alpha<1$) such that $\psi=0$ on $\partial H$ and
\begin{equation}\label{convpsi}
|\psi_k(\xi_k)-\psi(\xi)|+|\nabla\psi_k(\xi_k)-\nabla\psi(\xi)|\to0\ \hbox{ as }k\to+\infty
\end{equation}
for any sequence $(\xi_k)_{k\in\N}$ such that $\xi_k\in\overline{\Omega_k}$ for all $k\in\N$ and $\xi_k\to\xi\in\overline{H}$ as $k\to+\infty$. Consider now, for all $k\in\N$ large enough, the points
\begin{equation}\label{defzk}
z_k=\tau_k(x_k-y_k)\in\overline{\Omega_k}.
\end{equation}
Since $x_k\in\ol{\Omega^{M/\tau_k}}$ for all $k\in\N$ and $d(x_k)=|x_k-y_k|$, it follows that $|z_k|=\tau_kd(x_k)\le M$ for all $k\in\N$ large enough. Therefore, up to extraction of a subsequence, there is $z\in\overline{H}$ such that
\begin{equation}\label{defz}
z_k\to z\ \hbox{ as }k\to+\infty,
\end{equation}
hence $\nabla\psi_k(z_k)\to\nabla\psi(z)$ as $k\to+\infty$ by~\eqref{convpsi}. Since
$$\nabla\psi_k(z_k)=\frac{\nabla\varphi_{\tau_k}(x_k)}{\tau_k}\to0\ \hbox{ as }k\to+\infty$$
by~\eqref{xktauk} and~\eqref{defpsik}, it follows that $\nabla\psi(z)=0$. But, finally, Lemma~\ref{convprofile} implies that
$$\psi(x)=1-e^{x\cdot\nu(y)}$$
for all $x\in H$ and then for all $x\in\overline{H}$ by continuity. In particular, $|\nabla\psi(z)|=e^{z\cdot\nu(y)}>0$. This leads to a contradiction. As a consequence, the assumption~\eqref{xktauk} is ruled out, hence $\min_{\ol{\Omega^{M/\tau}}}|\nabla\pt|\gtrsim\tau$ as $\tau\to+\infty$.

Let us now show that $\max_{\ol{\O^{M/\tau}}}\big|\nabla\pt/|\nabla\pt|-\nabla d\big|\to0$ as $\tau\to+\infty$. Once again, argue by way of contradiction and assume that there exist $\epsilon>0$, a sequence $(\tau_k)_{k\in\N}$ of positive real numbers converging to $+\infty$, and a sequence $(x_k)_{k\in\N}$ in $\ol\O$ such that
\begin{equation}\label{xktaukbis}
x_k\in{\ol{\Omega^{M/\tau_k}}}\ \hbox{ and }\ \left|\frac{\nabla\varphi_{\tau_k}(x_k)}{|\nabla\varphi_{\tau_k}(x_k)|}-\nabla d(x_k)\right|\ge\epsilon>0\ \hbox{ for all }k\in\N.
\end{equation}
Up to extraction of a subsequence, let $y_k$, $\psi_k$, $\Omega_k$, $y$, $H$, $\psi$, $z_k$ and $z$ be defined as in~\eqref{defyk},~\eqref{defpsik},~\eqref{defomegak},~\eqref{defy},~\eqref{defH},~\eqref{defpsi},~\eqref{defzk} and~\eqref{defz}. On the one hand, as above there holds $\nabla\psi_k(z_k)\to\nabla\psi(z)=-e^{z\cdot\nu(y)}\nu(y)\neq0$ as $k\to+\infty$, hence 
\begin{equation}\label{nablapsik}
\frac{\nabla\psi_k(z_k)}{|\nabla\psi_k(z_k)|}\to-\nu(y)\ \hbox{ and }\ \frac{\nabla\varphi_{\tau_k}(x_k)}{|\nabla\varphi_{\tau_k}(x_k)|}\to-\nu(y)\ \ \hbox{ as }k\to+\infty.
\end{equation}
On the other hand, since $|x_k-y_k|=d(x_k)\to0$ and $y_k\to y\in\partial\Omega$, one infers that $\nabla d(x_k)\to\nabla d(y)=-\nu(y)$ as $k\to+\infty$. Together with~\eqref{nablapsik}, one gets a contradiction with~\eqref{xktaukbis}. Finally, $\max_{\ol{\O^{M/\tau}}}|\nabla\pt/|\nabla\pt|-\nabla d|\to0$ as $\tau\to+\infty$ and the proof of Theorem~\ref{thnabla} is thereby complete.
\end{proof}


\SE{The case of the annulus $\O=B_R\setminus \ol{B_r}$}\label{secradial}

This section is devoted to the proof of Theorem~\ref{thannulus}. It is actually an immediate consequence of the following proposition.

\begin{proposition}\label{pro:annulus}
	Let $\O$ be the annulus $\O=B_R\setminus \ol{B_r}$ with $0<r<R$ and $n\ge2$. Then, for every $\tau\ge0$, the \pf\ $\vp_\tau$ solving~\eqref{evm} is radially symmetric, i.e., $\pt(x)=\Phi_\tau(|x|)$ in $\ol\O$ for some function $\Phi_\tau:[r,R]\to[0,1]$. Furthermore, there exists $r_\tau\in(r,\frac{r+R}2)$ such that $\Phi_\tau'>0$ in $[r,r_\tau)$, $\Phi_\tau'<0$ in $(r_\tau,R]$, and
	$$r_\tau\to\frac{r+R}2\ \hbox{ as }\tau\to+\infty.$$
\end{proposition}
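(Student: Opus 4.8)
The plan is to prove Proposition~\ref{pro:annulus} in two stages: first the radial symmetry of $\pt$ for every fixed $\tau\ge0$, then the qualitative monotonicity of the radial profile $\Phi_\tau$ and the asymptotic location $r_\tau\to(r+R)/2$. For the \textbf{radial symmetry}, I would argue by uniqueness. Given the principal eigenfunction $\pt$ solving~\eqref{evm} on the annulus $\O=B_R\setminus\ol{B_r}$, for any rotation $\rho\in SO(n)$ the function $x\mapsto\pt(\rho x)$ is again a positive solution of~\eqref{evm} with the same eigenvalue $\lt$ (since $\Delta$ and $|\nabla\cdot|$ and the domain $\O$ are rotation-invariant) and with the same $L^\infty$-normalization $\max_{\ol\O}=1$. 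By the uniqueness statement recalled in the introduction (the $L^\infty$-normalized solution of~\eqref{evm} is unique, which is also contained in Proposition~\ref{pro:char}), it follows that $\pt(\rho x)=\pt(x)$ for all $\rho\in SO(n)$ and all $x\in\ol\O$. Hence $\pt$ depends only on $|x|$, i.e.\ $\pt(x)=\Phi_\tau(|x|)$ for some continuous $\Phi_\tau:[r,R]\to[0,1]$, which is $C^2$ on $(r,R)$ because $\pt\in C^2_{loc}(\O)$, and the ODE satisfied by $\Phi_\tau$ on $(r,R)$ is
\[
-\Phi_\tau''-\frac{n-1}{s}\Phi_\tau'-\tau|\Phi_\tau'|=\lt\,\Phi_\tau,\qquad \Phi_\tau(r)=\Phi_\tau(R)=0,\quad \Phi_\tau>0\ \text{on }(r,R).
\]

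For the \textbf{monotonicity of $\Phi_\tau$}, since $\Phi_\tau>0$ in $(r,R)$ and vanishes at both endpoints, it attains an interior maximum at some $r_\tau\in(r,R)$ with $\Phi_\tau'(r_\tau)=0$; I must show this maximum point is unique and that $\Phi_\tau'>0$ before it and $\Phi_\tau'<0$ after. The argument is that at any interior critical point $s_0$ one has $\Phi_\tau'(s_0)=0$, so the ODE gives $-\Phi_\tau''(s_0)=\lt\Phi_\tau(s_0)>0$, forcing $\Phi_\tau''(s_0)<0$: every interior critical point is a strict local maximum, which combined with continuity forces uniqueness of the critical point, hence $\Phi_\tau'>0$ on $[r,r_\tau)$ and $\Phi_\tau'<0$ on $(r_\tau,R]$ (the one-sided derivatives at $r$ and $R$ being nonzero by Hopf's lemma, consistent with Theorem~\ref{thnabla}). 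The bound $r_\tau<(r+R)/2$ requires comparing the two ``halves'': on $(r,r_\tau)$ we have $\Phi_\tau'>0$ so $-\tau|\Phi_\tau'|=-\tau\Phi_\tau'$ and the operator reads $-\Phi_\tau''-\tfrac{n-1}{s}\Phi_\tau'-\tau\Phi_\tau'$, whereas on $(r_\tau,R)$ it reads $-\Phi_\tau''-\tfrac{n-1}{s}\Phi_\tau'+\tau\Phi_\tau'$; the drift term $\pm\tau\Phi_\tau'$ has a sign that, together with the curvature term $\tfrac{n-1}{s}$, should be exploited via a reflection/comparison argument to show the inner region must be shorter than the outer region. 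Concretely, I would reflect the outer profile about the midpoint and build a comparison function, using that $-\Delta-\tau|\nabla\cdot|$ applied to the reflected outer profile, when $n\ge2$, picks up an unfavorable sign from the curvature term $\tfrac{n-1}{s}$ (which is larger near $r$), forcing $r_\tau$ to lie strictly on the inner side of the midpoint; when $\tau=0$ this reduces to the classical fact that the Dirichlet ground state of $-\Delta$ on an annulus peaks inside the midradius, and for $\tau>0$ the drift only reinforces the effect since $-v_\tau=\tau\nabla\pt/|\nabla\pt|$ points outward near $r$ and inward near $R$.

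For the \textbf{asymptotics} $r_\tau\to(r+R)/2$, I would combine Theorem~\ref{thmaxima} with the radial structure: by Theorem~\ref{thmaxima}, $d(x_\tau)\to\max_{\ol\O}d=(R-r)/2$ as $\tau\to+\infty$, where $x_\tau$ is a maximum point of $\pt$; but for the annulus the set of points at distance $(R-r)/2$ from $\partial\O$ is exactly the sphere $|x|=(r+R)/2$, and since $\pt=\Phi_\tau(|x|)$ the maximum is attained precisely on $|x|=r_\tau$, so $x_\tau$ can be taken with $|x_\tau|=r_\tau$ and $d(x_\tau)=\min(r_\tau-r,R-r_\tau)$. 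From $\min(r_\tau-r,R-r_\tau)\to(R-r)/2$ and $r<r_\tau<R$ one immediately gets $r_\tau\to(r+R)/2$. Finally, Theorem~\ref{thannulus} follows from Proposition~\ref{pro:annulus}: for $|x|\neq(r+R)/2$, once $\tau$ is large enough that $r_\tau$ is on the same side of $(r+R)/2$ as $|x|$ and close enough to it, the sign of $\Phi_\tau'(|x|)$ is determined ($\Phi_\tau'>0$ if $|x|<r_\tau$, $<0$ if $|x|>r_\tau$), so $\nabla\pt(x)/|\nabla\pt(x)|=\sign(\Phi_\tau'(|x|))\,x/|x|$ equals $-x/|x|$ when $|x|>(r+R)/2$ and $+x/|x|$ when $|x|<(r+R)/2$, which is precisely $\nabla d(x)$ on the annulus. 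The main obstacle I anticipate is the strict inequality $r_\tau<(r+R)/2$: getting the right comparison function that transfers the curvature/drift asymmetry into a clean inequality is the delicate point, whereas the symmetry argument and the limit $r_\tau\to(r+R)/2$ are comparatively routine given the earlier results.
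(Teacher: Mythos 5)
Your arguments for the radial symmetry (via uniqueness of the normalized solution of~\eqref{evm} under rotations), for the uniqueness of the critical point (since $\Phi_\tau''<0$ at any interior critical point because $\lt\Phi_\tau>0$), for the Hopf conditions at the endpoints, and for the limit $r_\tau\to(r+R)/2$ (via Theorem~\ref{thmaxima} and the identification of $d$ on the annulus) are all correct and coincide with the paper's proof in substance. The gap is in the strict inequality $r_\tau<(r+R)/2$, which you explicitly flag as the ``delicate point'' but do not actually carry out. Two things go wrong in the sketch you do offer. First, you propose to reflect the \emph{outer} profile about the \emph{midpoint} $(r+R)/2$; the paper instead reflects the \emph{inner} profile about $r_\tau$, defining $\t\Phi_\tau(\rho)=\Phi_\tau(\rho)$ on $[r,r_\tau]$ and $\t\Phi_\tau(\rho)=\Phi_\tau(2r_\tau-\rho)$ on $(r_\tau,2r_\tau-r]$. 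That choice is essential: since $\Phi_\tau'(r_\tau)=0$, the reflected function is $C^2$ across $r_\tau$, and one computes
\[
-\t\Phi_\tau''-\frac{n-1}{\rho}\t\Phi_\tau'-\tau|\t\Phi_\tau'|-\lt\t\Phi_\tau
=\Big(\frac{n-1}{\rho}+\frac{n-1}{2r_\tau-\rho}\Big)\,\Phi_\tau'(2r_\tau-\rho)>0
\]
on $(r_\tau,2r_\tau-r]$, so $\t\Phi_\tau$ is a strict supersolution on the annulus $B_{2r_\tau-r}\setminus\ol{B_r}$. The first characterization in Proposition~\ref{pro:char} then gives $\t\lambda(\tau)\ge\lt$, and the \emph{strict} monotonicity of $\lt$ with respect to domain inclusion (also from Proposition~\ref{pro:char}) forces $B_{2r_\tau-r}\setminus\ol{B_r}\subsetneq B_R\setminus\ol{B_r}$, i.e.\ $r_\tau<(r+R)/2$. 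Reflecting about the midpoint instead would not yield a function that matches $\Phi_\tau$ smoothly anywhere, and it is not clear what comparison you would run with it.

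Second, your heuristic that ``for $\tau>0$ the drift only reinforces the effect'' is misleading. The drift term $\tau|\Phi_\tau'|$ is even in $\Phi_\tau'$ and is therefore invariant under the reflection; it contributes exactly zero to the above right-hand side. The entire asymmetry comes from the curvature term $\tfrac{n-1}{\rho}\Phi_\tau'$, which is why the hypothesis $n\ge2$ appears. If you try to lean on the drift to push $r_\tau$ inward, the argument collapses, because the drift is neutral. Rewriting the strict-inequality step along the paper's lines (reflection about $r_\tau$, supersolution, eigenvalue monotonicity) repairs the gap cleanly.
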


\begin{proof}
	For any $\tau\ge0$, the symmetry of $\pt$ follows from the uniqueness of the vector field $v_\tau$ minimizing~\eqref{lt} and the invariance of $\O$ by rotation. The function $\Phi_\tau:[r,R]\ni\rho\mapsto\Phi_\tau(\rho)\in[0,1]$ defined by~$\varphi_\tau(x)=\Phi_\tau(|x|)$ for every $x\in\ol\O$, is a $C^2([r,R])$ solution of the equation
	\Fi{radial}
	-\Phi_\tau''-\frac{n-1}{\rho}\Phi_\tau'-\tau|\Phi_\tau'|=\lt\Phi_\tau,\quad\rho\in[r,R],
	\Ff
	with $\Phi_\tau>0$ in $(r,R)$ and $\Phi_\tau(r)=\Phi_\tau(R)=0$. Since $\lt>0$ and $\Phi_\tau>0$ in $(r,R)$, we see that $\Phi_\tau''<0$ at all interior critical points of $\Phi_\tau$, which readily implies the existence of a unique radius
	$$r_\tau\in(r,R)$$
	at which $\Phi$ changes monotonicity. More precisely, one infers that $\Phi_\tau'>0$ in $(r,r_\tau)$ and $\Phi_\tau'<0$ in $(r_\tau,R)$. Hopf lemma also yields $\Phi_\tau'(r)>0$ and $\Phi_\tau'(R)<0$. Furthermore, Theorem~\ref{thmaxima} implies that
	$$r_\tau\to\frac{r+R}{2}\ \hbox{ as }\tau+\infty.$$
	As a consequence, for any given $x\in\overline{\O}$, there holds, for $\tau$ large enough,
	$$-\frac{v_\tau(x)}{\tau}=\frac{\nabla\pt(x)}{|\nabla\pt(x)|}=\left\{\begin{array}{rl}
	\displaystyle\frac{x}{|x|}=\nabla d(x) & \hbox{if }\displaystyle r\le |x|<\frac{r+R}{2},\vspace{3pt}\\
	-\displaystyle\frac{x}{|x|}=\nabla d(x) & \hbox{if }\displaystyle \frac{r+R}{2}<|x|\le R.\end{array}\right.$$
	These properties are actually sufficient to get the conclusion of Theorem~\ref{thannulus}.
	
	In order to complete the proof of Proposition~\ref{pro:annulus}, let us also show the additional property $r_\tau<(r+R)/2$. To do so, for any given $\tau\ge0$, consider the reflection of $\Phi$ with respect to $r_\tau$, i.e.,
	$$\t\Phi_\tau(\rho):=\begin{cases}
	\Phi_\tau(\rho) & \text{if }\rho\in[r,r_\tau]\vspace{3pt}\\
	\Phi_\tau(2r_\tau-\rho) & \text{if }\rho\in(r_\tau,2r_\tau-r].
	\end{cases}$$
	The function $\t\Phi_\tau$ is of class $C^2([r,2r_\tau-r])$ and it satisfies, for $\rho\in(r_\tau,2r_\tau-r]$,
	$$-\t\Phi_\tau''-\frac{n-1}{\rho}\t\Phi_\tau'-\tau|\t\Phi_\tau'|-\lt\t\Phi_\tau=\left(\frac{n-1}{\rho}+\frac{n-1}{2r_\tau-\rho}\right)\Phi_\tau'(2r_\tau-\rho)>0.$$ 
	Namely, $\t\Phi_\tau$ is a strict supersolution of~\eqref{radial} in $(r_\tau,2r_\tau-r]$, that is, the function $\t\vp_\tau\in C^2(\overline{B_{2r_\tau-r}}\setminus B_r)$ defined by
	$$\t\vp_\tau(x):=\t\Phi_\tau(|x|),\ \ x\in\overline{B_{2r_\tau-r}}\setminus B_r,$$
	satisfies
	$$-\Delta\t\vp_\tau-\tau|\nabla\t\vp_\tau|>\lt\t\vp_\tau\ \hbox{ in }\overline{B_{2r_\tau-r}}\setminus\ol{B_{r_\tau}}.$$
	It also satisfies $-\Delta\t\vp_\tau-\tau|\nabla\t\vp_\tau|\ge\lt\t\vp_\tau$ in $\ol{B_{r_\tau}}\setminus B_r$, where it coincides with $\pt$. Thus, calling $\t\lambda(\tau)$ the eigenvalue given by problem~\eqref{evm} with domain $B_{2r_\tau-r}\setminus\ol{B_r}$ and observing that $\t\vp_\tau>0$ in $B_{2r_\tau-r}\setminus\ol{B_r}$, the first characterization in formula~\eqref{char} of Proposition~\ref{pro:char} yields
	$$\t\lambda(\tau)\geq\lt.$$
	Then, by the monotonicity of $\lt$ provided by Proposition~\ref{pro:char}, we infer that $B_{2r_\tau-r}\setminus\ol{B_r}\subset B_R\setminus\ol{B_r}$. Moreover, the inclusion is strict because otherwise, again by Proposition~\ref{pro:char}, $\t\vp_\tau$ would coincide with $\pt$ up to a scalar multiple, but we know that $\t\vp_\tau$ is a strict supersolution of the equation satisfied by $\pt$ in $\ol{B_{2r_\tau-r}}\setminus\ol{B_{r_\tau}}$. This shows that $2r_\tau-r<R$, that is, $r_\tau<(r+R)/2$. The proof of Proposition~\ref{pro:annulus} is thereby complete.
\end{proof}


\end{document}